\documentclass[12pt,oneside,reqno]{amsart}
\usepackage[all]{xy}
\usepackage{amsfonts,amsmath,oldgerm,amssymb,amscd}
\UseComputerModernTips
\numberwithin{equation}{section}
\usepackage[breaklinks]{hyperref}

\usepackage{caption} 
\captionsetup{belowskip=12pt,aboveskip=4pt}

\setlength{\oddsidemargin}{0.40in}
\setlength{\evensidemargin}{0.0in}
\setlength{\textwidth}{6in}
\setlength{\textheight}{8.4in}
\setlength{\parskip}{0.15in}

\newtheorem{theorem}{Theorem}
\newtheorem{proposition}[theorem]{Proposition}
\newtheorem{conj}{Conjecture}
\newtheorem{lemma}[subsection]{{\bf Lemma}}

\newtheorem{remark}[subsection]{Remark}



\begin{document}
\title{Perfect powers in an alternating sum of consecutive cubes} 

\author[Das]{Pranabesh Das}
\address{Pranabesh Das, Stat-Math Unit, India Statistical Institute\\
7, S. J. S. Sansanwal Marg, New Delhi, 110016, India}
\email{pranabesh.math@gmail.com}

\author[Dey]{Pallab Kanti Dey}
\address{Pallab Kanti Dey, Stat-Math Unit, India Statistical Institute\\
7, S. J. S. Sansanwal Marg, New Delhi, 110016, India}
\email{pallabkantidey@gmail.com}

\author[Maji]{B. Maji}
\address{Bibekananda Maji, Departament of Mathematics\\ Harish-Chandra Research Institute\\ Chhatnag Road, Jhunsi\\ India, 211019}
\email{bibek10iitb@gmail.com}

\author[Rout]{S. S. Rout}
\address{Sudhansu Sekhar Rout, Departament of Mathematics\\ Harish-Chandra Research Institute\\ Chhatnag Road, Jhunsi\\ India, 211019}
\email{lbs.sudhansu@gmail.com}

\thanks{2010 Mathematics Subject Classification: Primary 11D61, Secondary 11D41, 11F11, 11F80.  \\
Keywords: Diophantine equation, Galois representation, Frey curve, modularity, level lowering, linear forms in logarithms}
\maketitle
\pagenumbering{arabic}
\pagestyle{headings}

\begin{abstract}
In this paper, we consider the problem about finding out perfect powers in an alternating sum of consecutive cubes. More precisely, we completely solve the Diophantine equation $(x+1)^3 - (x+2)^3 + \cdots - (x + 2d)^3  + (x + 2d + 1)^3 = z^p$, where $p$ is prime and $x,d,z$ are integers with $1 \leq d \leq 50$. 
 
\end{abstract}

\section{Introduction}
In 1964, Leveque \cite{WJL} proved that, if $f(x) \in \mathbb{Z}[x]$ is a polynomial of degree $k \geq 2$ with at least two simple roots, and $ n \geq \mbox{ max }\{2,5-k \}$ is an integer, then the superelliptic equation
\begin{equation}\label{eq00}
f(x) = z^n
\end{equation}
has at most finitely many solutions in integers $x$ and $z$.
This result was extended by Schinzel and Tijdeman \cite{ST}, through application of lower bounds for linear forms
in logarithms, to show that equation \eqref{eq00} has in fact at most finitely many solutions in integers $x, z$
and variable $n \geq \mbox{ max }\{2,5-k \}$. 

Earlier in 1875, Lucas \cite{EL} considered the Diophantine equation 
\begin{equation}\label{eq01}
1^2 + 2^2 +\cdots + x^2 =y^2,
\end{equation}
and asked whether the equation \eqref{eq01} has solutions in positive integers $(x,y)$ other than $(1,1)$ and $(24,70)$. Watson \cite{GNW}  completely solved the equation \eqref{eq01} and showed that there are no other solutions. 

 In 1956, Sch\"{a}ffer \cite{JS}  studied the more general equation 
\begin{equation}\label{eq02}
1^k + 2^k +\cdots + x^k =y^n.
\end{equation}
It is easy to see that for every $k$ and $n$, $(x,y)=(1,1)$ is a solution of \eqref{eq02}. Sch\"{a}ffer \cite{JS} proved that if $k\geq 1$ and $n\geq 2$ are fixed, then \eqref{eq02} has only finitely many solutions except the following cases
\begin{equation}\label{eq03}
(k,n) \in \{(1,2), (3,2), (3,4), (5,2)\}
\end{equation}
where, in each case, there are infinitely many such solutions.
In the same paper Sch\"{a}ffer gave a conjecture regarding the integral solutions of \eqref{eq02}. He conjectured that, for $k\geq 1$ and $n\geq 2$ with $(k,n)$ not in the set \eqref{eq03}, equation \eqref{eq02} has only one non-trivial solution, namely $(k,n,x,y) = (2,2,24,70)$. There are some results, at least in principle, to determine all
solutions of \eqref{eq02}. However, the bounds provided by these results are not given explicitly. Jacobson, Pint\'{e}r, Walsh \cite{JPW} confirm the conjecture for $n=2$ and $k$ even  with $k\leq 58$. Recently, Bennett, Gy\H{o}ry, Pint\'{e}r \cite{BGP} proved completely the Sch\"{a}ffer conjecture for arbitrary $n$ and $k\leq 11$. Following and extending the approach of \cite{BGP} and using modern techniques of Diophantine analysis including Baker's theory, Frey curves and the theory of modular forms, Pint\'{e}r \cite{AP} proved Sch\"{a}ffer conjecture for odd values of $k$ with $1 \leq k\leq 170$ and even values of $n$. 

Zhang and Bai \cite{ZB} generalized the equation \eqref{eq02} and considered the more general equation
\begin{equation}\label{eq04}
(x+1)^{k}+(x+2)^{k}+\cdots+(x+d)^{k}=y^{n}.
\end{equation}
They completely solved the equation \eqref{eq04} for $k=2$ and $d=x$. For $k=2$, they also proved that for a prime $p \equiv \pm 5\ (\mathrm{mod}\ 12)$ with $p\mid d$ and $\nu_{p}(d)\not \equiv 0\ (\mathrm{mod}\ n)$, the equation \eqref{eq04} has no integer solutions. Cassels \cite{SC} solved the equation \eqref{eq04} completely for $n = 2, d = 3$ and $k = 3$. Zhang \cite{ZZ} determined the perfect powers in sum of three consecutive cubes by rewritting the equation \eqref{eq04} for $k=d=3$ as 
\begin{equation}\label{eq05}
(x-1)^{3}+x^{3}+(x-1)^{3}=y^{n}.
\end{equation}
Stroeker \cite{RJS} completely solved the equation \eqref{eq04} for $k=3, n=2$ and $2\leq d\leq 50$ using linear forms in elliptic logarithms. Recently, Bennett, Patel and Siksek \cite{BPS} extended the result of Stroeker for $n \geq 3$.

Several generalizations  of \eqref{eq02} have been considered by different authors. For example Dilcher \cite{KD} studied the equation 
\begin{equation}\label{eq03a}
\chi(1)1^{k}+\chi(2)2^{k}+\cdots+\chi(xf)(xf)^{k} = by^{n}
\end{equation}
where $\chi$ is a primitive quadratic residue class character with conductor $f= f_{\chi}$ and $k, b \neq 0$ are fixed integers. This may be viewed as a {\it character-twisted} analogue of a classic  equation  of  Sch\"{a}ffer. Recently, Bennett \cite{MB} completely solved the Diophantine equation 
\begin{equation}\label{eq03a}
1^{k}- 3^{k} + 5^k - \cdots+(4x - 3)^{k} - (4x - 1)^{k} = -y^{n}
\end{equation} 
for $3 \leq k \leq 6$.


In this paper we consider the following Diophantine equation
\begin{equation}\label{equn2}
(x+1)^3 - (x+2)^3 + \cdots  +(-1)^{r-1} (x+r)^3 = z^p,
\end{equation}
where $r,x,z$ are integers and $p$ is any prime number. 
Now, for odd $r$  \eqref{equn2} reduces to the following equation 
\begin{equation}\label{equn3}
\left(x + \frac{r+1}{2} \right) \left\{ {\left(x + \frac{r+1}{2} \right)}^2 + 3 \frac{r^2 -1}{4} \right\} = z^p.
\end{equation}

Putting $r = 2 d + 1$, we have
\begin{equation}\label{equn4}
( x + d +1 ) \left\{ (x + d+ 1)^2 + 3 d (d +1) \right\} = z^p.
\end{equation}
From the  equation \eqref{equn4}, we can see that $\gcd((x +d +1), (x+d+1)^2 + 3 d(d +1))$ divides $3d(d+1)$. Hence 
\begin{equation}\label{equn7}
x+d+1 = \alpha {z_1}^p \quad \textrm{and} \quad (x+d+1)^2 + 3 d (d+1)=\beta {z_2}^p 
\end{equation}
 for some integers $z_1, z_2$ and rationals $\alpha, \beta$ with $\alpha \beta = 1$ and $z_1 z_2 = z$. The denominator and the numerator of $\alpha$  is composed of prime divisors of $3d (d+1)$. From \eqref{equn4} and \eqref{equn7}, we deduce the following ternary equation
 \begin{equation}\label{equn8}
 \beta {z_2}^p-\alpha^{2}z_{1}^{2p} = 3d(d+1).
 \end{equation}
 If $\beta < 0$, then from the equation \eqref{equn7}, we have $z_2 < 0$. Also $\alpha < 0$ as $\alpha \beta = 1$. Hence, $(\pm z_1, z_2)$ is an integral solution of equation \eqref{equn8} corresponding to $(\alpha, \beta)$ if and only if $(\pm z_1, -z_2)$ is an integral solution of equation \eqref{equn8} corresponding to $(-\alpha, -\beta)$. Therefore it is enough to solve the equation \eqref{equn8} for $\beta > 0$.
 
 \smallskip
 Suppose $S_{d}$ is the set of such pairs of positive rationals $(\alpha, \beta)$.
We need to solve the equation \eqref{equn8} for each $(\alpha, \beta)\in S_{d}$ with $1\leq d\leq 50$. Clearing denominators we can rewrite the equation \eqref{equn8} as 
\begin{equation}\label{equn9}
 r{z_2}^p-s z_{1}^{2p} = t,
\end{equation}
where $r,s,t$ are positive integers and $\gcd(r,s,t)=1$.

Now we state our main theorem as follows.
\begin{theorem}\label{thm1}
Let $r = 2 d + 1$ with $ 1\leq d \leq 50$ and let $p$ be a prime. Then the integral solutions to the equation \eqref{equn2} are given in the Table \ref{tab1}.
\end{theorem}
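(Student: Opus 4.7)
The plan is to combine the reduction already carried out in the excerpt --- which converts Theorem \ref{thm1} into the study of the finite family of ternary equations \eqref{equn9}, indexed by $d \in \{1,\dots,50\}$ and by $(\alpha,\beta) \in S_d$ --- with the modular method, treating small exponents $p$ separately.

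First I would enumerate $S_d$ explicitly for each $d$. Since the support of $\alpha$ lies among the primes dividing $3d(d+1)$ and $\beta = 1/\alpha$, the set $S_d$ is finite and easy to generate, and the resulting triples $(r,s,t)$ in \eqref{equn9} can be tabulated by computer. A local sieve modulo a handful of small primes already eliminates many combinations of $(d,\alpha,\beta,p)$. For the exponents $p = 2$ and $p = 3$, equation \eqref{equn9} cuts out a curve of genus at most two, and its integral points can be found by classical descent together with elliptic curve or Thue--Mahler solvers, producing the entries of Table \ref{tab1} for these exponents.

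For $p \geq 5$, the main tool is the modular method. Equation \eqref{equn8} has signature $(p,2p,\infty)$ and so invites a Frey--Hellegouarch elliptic curve $E = E_{z_1,z_2}/\mathbb{Q}$ of the form
\[
E : Y^2 = X^3 + 2 \alpha z_1^p \, X^2 + \alpha^2 z_1^{2p} X,
\]
or a twist thereof, chosen so that its minimal discriminant is a $p$-th power away from a fixed finite set of primes dividing $6 d (d+1)$. Modularity of $E$ together with Ribet's level-lowering theorem then forces the residual representation $\bar\rho_{E,p}$ to be isomorphic to $\bar\rho_{f,p}$ for some weight two cuspidal newform $f$ of an explicit level $N$, depending on $d$ and $(\alpha,\beta)$ but independent of $p$. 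I would compute the newforms at each such $N$ and, for every one, attempt to derive a contradiction by comparing traces of Frobenius at auxiliary primes $\ell \nmid Np$: the relation $p \mid \mathrm{Norm}\bigl(a_\ell(E) - a_\ell(f)\bigr)$ bounds $p$ whenever the norm does not vanish identically.

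The main obstacle will be newforms that cannot be eliminated by this congruence game --- typically newforms attached to elliptic curves admitting a rational two-isogeny compatible with the structure of $E$, or CM forms whose coefficients agree with those of $E$ at many primes for structural reasons. To handle those I would deploy a linear forms in logarithms bound, rewriting \eqref{equn9} as an $S$-unit equation and invoking a Matveev-type estimate to obtain an effective upper bound on $p$, and I would supplement it with a multi-Frey argument using a second, independently constructed Frey curve, or with an analysis of the image of inertia at $p$. Intersecting the resulting constraints leaves only finitely many candidate primes $p$ for each $(d,\alpha,\beta)$, which can then be dispatched by direct computation to complete Table \ref{tab1}.
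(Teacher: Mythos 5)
Your overall architecture (reduce to the finite family \eqref{equn9}, handle $p=2,3$ classically, attack $p\ge 5$ with a Frey curve plus level lowering, and keep linear forms in logarithms in reserve) is in the same spirit as the paper, but the two concrete pillars of your $p\ge 5$ argument have problems. First, the curve you write down, $Y^2=X^3+2\alpha z_1^pX^2+\alpha^2z_1^{2p}X=X\left(X+\alpha z_1^p\right)^2$, is singular --- the cubic has a double root at $X=-\alpha z_1^p$ --- so it is not an elliptic curve and no modular machinery can be attached to it. The curve you presumably intend is $Y^2=X^3+2\alpha z_1^pX^2+\beta z_2^pX$ (the Lebesgue--Nagell-type Frey curve for $u^2+3d(d+1)=\beta z_2^p$), or, as in the paper, $Y^2=X(X+1)(X-Tz_3^{2p})$ attached to the normalized equation $z_2^p-Tz_3^{2p}=1$. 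Second, and more seriously, you have misidentified the obstruction to eliminating newforms. The difficulty is not primarily isogenies or CM: it is that whenever a triple $(r,s,t)$ admits a genuine solution --- in particular the case $r=t$, which always carries the trivial solution $(z_1,z_2)=(0,1)$ coming from $x=-(d+1)$ --- the Frey curve of that solution realizes a rational newform at the predicted level, and no comparison of traces of Frobenius can ever rule that newform out. Your sketch contains no mechanism for proving that such a surviving class contributes only the solutions of Table \ref{tab1}.

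The paper's route differs in exactly the ways needed to close these gaps. It first proves $p<4\times10^4$ unconditionally via Laurent's linear forms in \emph{two} logarithms (Proposition \ref{Linearform}), after disposing separately of the degenerate cases $z_1\in\{-1,0,1\}$, $z_2=1$ and $z_1^2=z_2$ --- note that two entries of Table \ref{tab1}, namely $(d,x,z,p)=(20,-15,6,5)$ and $(27,26,6,7)$, arise precisely from these degenerate cases, which your sketch never examines. With $p$ bounded, it runs the purely local Kraus-type sieve of Lemma \ref{lemma4} for each remaining $(d,p,r,s,t)$ (no Frey curve is needed when $r\ne t$), reserves the modular method for $r=t$ with the combined criterion of Lemma \ref{lemma5} (which tolerates the trivial solution by demanding $B(p,q)=\{\bar 0\}$ together with $p\nmid a_q(F)^2-4$), and finishes the $1774$ surviving equations by local solubility, the descent of Lemmas \ref{lemma6}, \ref{lemma7} and \ref{lemma7a} over $\mathbb{Q}(\sqrt{-m})$, and finally eleven Thue equations. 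Your ``dispatched by direct computation'' is standing in for all of this last stage; without the descent step the residual Thue equations are too numerous and too large to solve, and without an argument like Lemma \ref{lemma7a} you cannot certify that $X=0$ is the only solution in the $r=t$ cases. If you replace your Frey curve by a nonsingular one, promote the logarithm bound to the opening step (using Laurent's two-logarithm estimate rather than a general Matveev bound, so that the resulting bound on $p$ stays near $10^4$), and add the descent stage, your plan becomes essentially the paper's proof.
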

\smallskip
\begin{remark}
If $z = 0$, then from the equation \eqref{equn4}, we have $x = -(d+1)$ as $(x + d+ 1)^2 + 3 d (d +1) > 0$ for any $d$. Therefore, $(x,z,p) = (-d-1,0,p)$ are the trivial solutions of the equation \eqref{equn2} for any $d$.
\end{remark}
We follow the methods developed in \cite{BPS} for the proof of Theorem \ref{thm1}. We would like to point out that the main techniques used in this paper are not original and nowadays well documented in the literature. The main focus of this paper is to highlight the fact that combinations of these techniques sometimes become very handy in solving exponential diophantine equations explicitly.

\section{Perliminaries}

We use well known tools such as linear forms in two logarithms, variation of Krauss crieterion, modular method, local solubility, descent for the proof of Theorem \eqref{thm1}. In this section we provide the necessary details for these methods.

\subsection{Linear forms in $2$ logarithms:}
\smallskip

We state a special case of the following well known result of Laurent \cite{ML}.

\begin{proposition} [\cite{ML},Corollary 2)]\label{Linearform}
Let $\alpha_1$ and $ \alpha_2$ be two positive real, multiplicatively independent algebraic numbers and $\log\alpha_1, \log\alpha_2$ be any fixed determinations of the logarithms that are real and positive. Write $D = [\mathbb{Q}(\alpha_1, \alpha_2) : \mathbb{Q}]$ and 
$$ b'= \frac{b_1}{D \log A_2}  + \frac{b_2}{D \log A_1} $$
where $b_1, b_2$ are positive integers and $A_1, A_2$ are real numbers greater than one such that 
$$ \log A_i \geq \max \left\{ h(\alpha_i) , \frac{|\log \alpha_i|}{D}, \frac{1}{D}  \right\}, \quad i= 1, 2. $$
with
$$ 
h(\alpha) = \frac{1}{d} \left( \log |a| + \sum_{i=1}^d \log \max ( 1, |\alpha^{(i)}| ) \right),
$$
where $a$ is the leading coefficient of the minimal polynomial of $\alpha$ and the $\alpha^{(i)}$'s are the conjugates of $\alpha$ in $\mathbb{C}$.

Let $\Lambda = b_2 \log \alpha_2 - b_1 \log \alpha_1.$ Then 
$$ \log |\Lambda| \geq -25.2 D^4 (\max \{ \log b' + 0.38, 10/D, 1  \})^2 \log A_1 \log A_2. $$
 
\end{proposition}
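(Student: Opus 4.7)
The plan is to prove Theorem \ref{thm1} by reducing it, for each $d$ with $1\leq d\leq 50$, to a finite collection of ternary equations of the shape \eqref{equn9}, and then disposing of each such equation separately according to the size of the prime exponent $p$. Because the numerator and denominator of $\alpha$ are built from prime divisors of $3d(d+1)$ and because $p$-th power factors may be absorbed into $z_1^p$, the set $S_d$ of relevant normalized pairs is finite and can be enumerated explicitly; for each $(\alpha,\beta)\in S_d$ one obtains a concrete equation $r z_2^p - s z_1^{2p}=t$ with $\gcd(r,s,t)=1$.

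For the small exponents $p\in\{2,3\}$ I would work directly with \eqref{equn4}: when $p=2$ the equation defines a curve of genus one and its integer points can be enumerated by the linear-forms-in-elliptic-logarithms method of Stroeker or by a two-descent in \textsc{Magma}; when $p=3$ the equation reduces to a finite list of Thue (or Thue--Mahler) equations resolvable by standard algorithms. These low-exponent cases yield only the trivial solutions $(x,z,p)=(-d-1,0,p)$ together with the sporadic entries that will appear in Table \ref{tab1}.

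The main work concerns $p\geq 5$, and here I would follow the modular method of Bennett--Patel--Siksek. To a hypothetical non-trivial solution $(z_1,z_2)$ of \eqref{equn9} one attaches a Frey elliptic curve $E=E(z_1,z_2)$ whose conductor divides a quantity depending only on $r,s,t$. By modularity together with Ribet's level-lowering theorem, the residual representation $\bar\rho_{E,p}$ arises from a weight-$2$ newform $f$ at a predicted small level $N_{r,s,t}$. Iterating over the finite list of such newforms, each $f$ is tested for elimination by comparing $a_\ell(E)\pmod p$ with $a_\ell(f)\pmod p$ at a range of auxiliary primes $\ell$ of good reduction, where the possible values of $a_\ell(E)$ are constrained by the ternary equation read modulo $\ell$.

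The hard part is that this elimination routinely fails for CM newforms and for rational newforms attached to elliptic curves that share a mod-$p$ isogeny structure with the Frey curve; such forms remain consistent with $E$ at infinitely many $\ell$. For these stubborn forms I would apply the variation of Kraus's criterion mentioned in the preliminaries at carefully chosen auxiliary primes $\ell\equiv 1\pmod p$ to obtain an effective sieve on $p$, and where even this does not suffice I would invoke Proposition \ref{Linearform} on a linear form $\Lambda=b_2\log\alpha_2-b_1\log\alpha_1$ extracted from \eqref{equn8} to obtain an absolute upper bound $p\leq p_0(d)$. The finitely many remaining primes are then cleared by a direct computation combining local solubility and descent on the associated genus-one or hyperelliptic curves, reducing the full solution set to exactly the entries recorded in Table \ref{tab1}.
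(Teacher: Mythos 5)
The statement you were asked to prove is Proposition~\ref{Linearform}, which is Laurent's lower bound for linear forms in two logarithms (Corollary~2 of \cite{ML}): a transcendence-theoretic inequality of the form $\log|\Lambda| \geq -25.2\, D^4 (\max\{\log b' + 0.38, 10/D, 1\})^2 \log A_1 \log A_2$. Your proposal does not address this statement at all. What you have written is an outline of the proof of the main result, Theorem~\ref{thm1} (reduction to ternary equations, Frey curves, level lowering, Kraus-type sieves, descent, Thue equations). Worse, your outline explicitly \emph{invokes} Proposition~\ref{Linearform} as a tool to bound the exponent $p$, so as an argument for the Proposition itself it would be circular.

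For the record, the paper does not prove this Proposition either: it is quoted verbatim from Laurent's paper and used as a black box. An actual proof would require the machinery of interpolation determinants (or, in older treatments, auxiliary functions and zero estimates à la Baker--Gel'fond): one constructs a suitable interpolation matrix built from the functions $z \mapsto z^{k} \alpha_1^{\ell z}$ evaluated at points of an arithmetic progression determined by $b_1, b_2$, bounds its determinant from above analytically using the smallness of $\Lambda$ and from below arithmetically via a Liouville-type estimate involving the heights $\log A_1, \log A_2$ and the degree $D$, and derives the stated numerical inequality by optimizing the parameters. None of this appears in your proposal, so there is a genuine and complete gap: the statement in question is simply not what you proved.
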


\subsection{Variation of Krauss Criterion}
Now we state the following variation of Krauss criterion for the non-existence of integral solutions to the equation \eqref{equn9} for given $r,s,t$ and $p$.
\begin{lemma}[\cite{BPS}, Lemma 6.1]\label{lemma4}
Let $p \geq 3$ be prime. Let $r,s$ and $t$ be positive integers satisfying $\gcd(r,s,t)=1$. Also let $q=2kp+1$ be a prime that does not divide $r$. Define 
\begin{equation}
\mu(p,q)=\{\eta^{2p}:\eta \in \mathbb{F}_{q}\}=\{0\}\cup \{\zeta \in \mathbb{F}_{q}^{*}:\zeta^{k}=1\} 
\end{equation}
and 
\begin{equation}
B(p,q)=\{\zeta \in \mu(p,q):((s\zeta+t)/r)^{2k}\in \{0,1\}\}.
\end{equation}
If $B(p,q)=\phi$, then the equation \eqref{equn9} does not have any integral solution.
\end{lemma}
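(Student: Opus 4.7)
The plan is a direct reduction modulo the auxiliary prime $q=2kp+1$. Suppose for contradiction that $(z_1,z_2)\in\mathbb{Z}^{2}$ solves \eqref{equn9}. The first step is to verify the identity used to define $\mu(p,q)$: since $\mathbb{F}_q^{*}$ is cyclic of order $q-1=2kp$, the image of the $2p$-power map $\eta\mapsto\eta^{2p}$ is the unique subgroup of order $(q-1)/\gcd(2p,q-1)=k$, which is exactly $\{\zeta\in\mathbb{F}_q^{*}:\zeta^{k}=1\}$. Adjoining $0=0^{2p}$ recovers $\mu(p,q)$. The same reasoning shows that the image of the $p$-power map in $\mathbb{F}_q^{*}$ has order $(q-1)/p=2k$, so every element $\xi$ in this image satisfies $\xi^{2k}=1$.

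Next I would reduce \eqref{equn9} modulo $q$. Setting $\zeta\equiv z_1^{2p}\pmod{q}$, we have $\zeta\in\mu(p,q)$ by construction, and since $q\nmid r$ the element $r$ is invertible in $\mathbb{F}_q$, giving
\[
z_2^{p}\equiv\frac{s\zeta+t}{r}\pmod{q}.
\]
If $q\mid z_2$ then the right hand side vanishes in $\mathbb{F}_q$; otherwise $(z_2^{p})^{2k}=z_2^{q-1}\equiv 1\pmod{q}$ by Fermat's little theorem together with the observation above. In either case $\bigl((s\zeta+t)/r\bigr)^{2k}\in\{0,1\}$ in $\mathbb{F}_q$, so $\zeta\in B(p,q)$. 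This contradicts $B(p,q)=\emptyset$, so no integral solution can exist.

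There is no genuine obstacle in proving the lemma itself: it is a clean Fermat-style reduction to $\mathbb{F}_q$, and the only bookkeeping subtlety is the case $q\mid z_1 z_2$, which is absorbed by including $0$ on both sides of the final membership test (in $\mu(p,q)$ and in $\{0,1\}$). The real work when \emph{applying} the criterion is computational, namely scanning small values of $k$ to find a prime $q=2kp+1$ for which the finite set $B(p,q)$ turns out to be empty, thereby eliminating the exponent $p$ for a given triple $(r,s,t)$.
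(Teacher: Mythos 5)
Your proof is correct and is precisely the standard reduction-modulo-$q$ argument underlying this criterion; the paper itself gives no proof, importing the lemma verbatim from \cite{BPS}, Lemma 6.1, where the same Fermat-style computation (reduce $r z_2^p - s z_1^{2p}=t$ mod $q$, set $\zeta=z_1^{2p}$, raise to the $2k$-th power) is carried out. Nothing is missing: the only delicate point, the case $q\mid z_1 z_2$, is correctly absorbed by the presence of $0$ in $\mu(p,q)$ and of $0$ in the target set $\{0,1\}$.
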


\subsection{Modular method}
Before going to our problem we would like to give a brief description about modular method. Let $E$ be an elliptic curve over $\mathbb{Q}$ of conductor $N$ and $\#E(\mathbb{F}_{q})$ be the number of points on $E$ over the finite field $\mathbb{F}_{q}$ for a good prime $q$. Let $a_{q}(E)=q+1-\#E(\mathbb{F}_{q})$. By a newform $f$ of level $N$, we mean a normalizd cusp form of weight $2$ for the full modular group. Write
$f=q+\sum_{i\geq 2}c_iq^{i}$. Write $K=\mathbb{Q}(c_1,c_2,\cdots)$ for the
totally real number field generated by the Fourier coefficients of $f$. 
\par 
We say that the curve $E$ arises modulo $p$ from the newform $f$ (and write $E\sim_{p} f$) if there is a prime ideal $\mathfrak{p}$ of $K$ above $p$ such that for all but finitely many primes $q$, we have $a_{q}(E)\equiv c_q\ (\textrm{mod}\ \mathfrak{p})$. If $f$ is a rational newform, then $f$ corresponds to some elliptic curve $F$(say). If $E$ arises modulo $p$ from $f$, then we also say that $E$ arises modulo $p$ from $F$. In this regard we have the following result.

\begin{proposition}[\cite{HC}]\label{prop3}
Let $E$ and $F$ be elliptic curves over $\mathbb{Q}$ with conductors $N$ and $N'$ respectively. Suppose that $E$ arises modulo $p$ from $F$. For all primes $q$
\begin{enumerate}
\item if $q\nmid NN'$, then $a_{q}(E)\equiv a_{q}(F)\ (\textrm{mod}\ p)$ and 
\item if $q\nmid N'$ and $q\| N$, then $q+1 \equiv \pm a_{q}(F)\ (\textrm{mod}\ p).$
\end{enumerate}
\end{proposition}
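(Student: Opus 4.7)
The plan is to recast the Fourier-coefficient congruences of the definition as an isomorphism of semisimplified mod-$p$ Galois representations, and then extract each conclusion from local behaviour at $q$. Attached to $E$ and to $F$ (via the newform $f$ corresponding to $F$) are the residual representations $\bar\rho_{E,p},\bar\rho_{F,p}\colon G_{\mathbb Q}\to\mathrm{GL}_2(\mathbb F_p)$ on $p$-torsion, and the hypothesis that $E$ arises modulo $p$ from $f$ says that $a_q(E)\equiv a_q(F)\pmod p$ holds for all but finitely many $q$. These quantities are precisely the traces of $\mathrm{Frob}_q$ on the semisimplifications at the density-one set of primes where both representations are unramified. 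By Chebotarev combined with Brauer--Nesbitt, coincidence of traces on a density-one set forces an isomorphism $\bar\rho_{E,p}^{\mathrm{ss}}\cong\bar\rho_{F,p}^{\mathrm{ss}}$, from which the proposition will follow.

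For part (1), take $q\nmid NN'$ (with $q\ne p$ understood). By the N\'eron--Ogg--Shafarevich criterion, good reduction at $q$ is equivalent to the mod-$p$ representation being unramified at $q$, so $\bar\rho_{E,p}$ and $\bar\rho_{F,p}$ are both unramified at $q$, with $\mathrm{Frob}_q$ acting with traces $a_q(E)\bmod p$ and $a_q(F)\bmod p$ respectively. Reading off these traces on the now-isomorphic semisimplifications yields the claim directly.

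For part (2), $F$ still has good reduction at $q$, so $\bar\rho_{F,p}|_{D_q}$ is unramified with $\mathrm{Frob}_q$-trace $a_q(F)\bmod p$. On the other hand $q\|N$ forces $E$ to have multiplicative reduction at $q$, and Tate's $p$-adic uniformization of $E$ over $\mathbb Q_q$ realizes $E[p]$ as an extension of $\mathbb Z/p\mathbb Z$ by $\mu_p$, twisted by the unramified quadratic character $\varepsilon_q$ that detects split versus non-split multiplicative reduction. Concretely,
\[
\bar\rho_{E,p}|_{D_q}\;\sim\;\varepsilon_q\otimes\begin{pmatrix}\chi_p & * \\ 0 & 1\end{pmatrix},
\]
with $\chi_p$ the mod-$p$ cyclotomic character. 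Since $q\ne p$, the character $\chi_p$ is unramified at $q$ with $\chi_p(\mathrm{Frob}_q)=q$. Passing to the semisimplification kills the extension class, leaving an unramified representation whose $\mathrm{Frob}_q$-eigenvalues are $\varepsilon_q(\mathrm{Frob}_q)\in\{\pm 1\}$ and $\varepsilon_q(\mathrm{Frob}_q)\cdot q$. The trace is therefore $\pm(q+1)\bmod p$, and matching with $a_q(F)\bmod p$ via the global isomorphism of semisimplifications gives $q+1\equiv\pm a_q(F)\pmod p$.

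The main obstacle is supplying the two structural inputs that do the real work: first, upgrading the coincidence of Fourier coefficients at almost all $q$ to a bona fide isomorphism of residual representations (Chebotarev density plus Brauer--Nesbitt, implicitly using that we are comparing mod-$\mathfrak p$ reductions of pieces of compatible systems of $\ell$-adic representations); and second, the Tate-curve description of $\bar\rho_{E,p}|_{D_q}$ at a prime of multiplicative reduction, together with the bookkeeping of the unramified quadratic twist responsible for the $\pm$ in part (2). Granted these inputs, each conclusion reduces to a single trace computation at a single Frobenius.
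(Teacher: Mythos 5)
The paper offers no proof of this proposition: it is quoted verbatim from Cohen's book \cite{HC} (the result goes back to Kraus and Oesterl\'e), so there is nothing internal to compare your argument against. That said, your proof is the standard one and is essentially correct: the congruences $a_q(E)\equiv c_q \pmod{\mathfrak p}$ at a density-one set of primes, together with Chebotarev and Brauer--Nesbitt, give $\bar\rho_{E,p}^{\mathrm{ss}}\cong\bar\rho_{F,p}^{\mathrm{ss}}$, after which (1) is the trace of Frobenius at a prime of good reduction for both curves and (2) is the Tate-curve computation at a prime of multiplicative reduction for $E$, with the unramified quadratic twist accounting for the sign. Three small repairs are worth making. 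First, Brauer--Nesbitt compares characteristic polynomials, not traces alone; you should add that both determinants equal the mod-$p$ cyclotomic character (which they do, by the Weil pairing and by the determinant formula for the modular representation), so that agreement of traces suffices. Second, the N\'eron--Ogg--Shafarevich criterion is an equivalence for the full $\ell$-adic Tate module, not for the mod-$p$ representation; only the implication ``good reduction at $q\neq p$ implies $\bar\rho|_{I_q}$ trivial'' is true, and that is all you use, so the word ``equivalent'' should be weakened. Third, the conclusion as stated (and as you prove it) requires $q\neq p$; the proposition's ``for all primes $q$'' should be read with that exclusion, as is standard in this literature.
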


The following result provides a bound for the exponent $p$. 

\begin{proposition}[\cite{SS}]\label{prop4}
Let $E/\mathbb{Q}$ be an elliptic curve of conductor $N$ with $t \mid \#E(\mathbb{Q})_{\textrm{tors}}$ for some integer $t$. Suppose $f$ is a newform of level $N'$ and $q$ be a prime with $q \nmid N'$, $q^2 \nmid N$. Also let 
$$
S_q = \left\{a \in \mathbb{Z} : -2\sqrt{q} \leq a \leq 2\sqrt{q},\, a \equiv q+1 \,\,(\mathrm{mod}\ t)  \right\}. 
$$
Let $c_q$ be the $q$-th coefficient of $f$ and define 
$$
B_{q}^{'} (f) := \mathrm{Norm}_{K/\mathbb{Q}} \left(   (q+1)^2 - {c_q}^2\right) \prod_{a\in S_q} \mathrm{Norm}_{K/\mathbb{Q}} (a - c_q)
$$
and 
\begin{equation*}
B_q(f) = \begin{cases}
q\cdot B_{q}^{'}(f) \quad \mathrm{if }\,\, f\,\, \mathrm{is}\,\, \mathrm{irrational}, \\
B_{q}^{'}(f) \quad\, \quad \mathrm{if }\,\, f\,\, \mathrm{is}\,\, \mathrm{rational}.
\end{cases}
\end{equation*}
If $E\sim_p f$, then $p | B_q(f)$. 

\end{proposition}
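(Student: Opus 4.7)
The plan is to split into the two possible reduction types of $E$ at $q$---good or multiplicative, since $q^{2}\nmid N$---and in each case extract a factor of $B_q(f)$ which $p$ must divide. Fix a prime $\mathfrak{p}$ of $K=\mathbb{Q}(c_{1},c_{2},\dots)$ above $p$ witnessing $E\sim_{p}f$, so that $a_{\ell}(E)\equiv c_{\ell}\pmod{\mathfrak{p}}$ for all but finitely many primes $\ell$. By Chebotarev together with Brauer--Nesbitt, the semisimplifications of the mod-$\mathfrak{p}$ Galois representations $\bar\rho_{E,\mathfrak{p}}$ and $\bar\rho_{f,\mathfrak{p}}$ are isomorphic, so their traces of Frobenius agree modulo $\mathfrak{p}$ at every prime where both are unramified.

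When $q\nmid N$, both representations are unramified at $q$ (assuming $q\ne p$), giving $a_{q}(E)\equiv c_{q}\pmod{\mathfrak{p}}$. The Hasse bound forces $|a_{q}(E)|\le 2\sqrt{q}$, and since the reduction map $E(\mathbb{Q})_{\mathrm{tors}}\hookrightarrow\widetilde{E}(\mathbb{F}_{q})$ is injective at a good prime, $t\mid \#\widetilde{E}(\mathbb{F}_{q})=q+1-a_{q}(E)$; equivalently $a_{q}(E)\equiv q+1\pmod{t}$. Thus $a_{q}(E)\in S_{q}$ and $\mathfrak{p}\mid a_{q}(E)-c_{q}$, so $p\mid\mathrm{Norm}_{K/\mathbb{Q}}(a_{q}(E)-c_{q})$, a factor appearing in the product defining $B_{q}'(f)$.

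When $q\| N$, $E$ has multiplicative reduction and the Tate uniformization shows that the semisimplification of $\bar\rho_{E,\mathfrak{p}}|_{D_{q}}$ is unramified (for $q\ne p$), with Frobenius acting by eigenvalues $\epsilon$ and $q\epsilon$, where $\epsilon=\pm 1$ distinguishes split from non-split reduction. Its trace $\epsilon(q+1)$ must equal the trace of Frobenius on $\bar\rho_{f,\mathfrak{p}}^{\mathrm{ss}}$, namely $c_{q}$ (since $q\nmid N'$ makes the latter unramified at $q$). Hence $c_{q}\equiv\pm(q+1)\pmod{\mathfrak{p}}$, so $\mathfrak{p}\mid (q+1)^{2}-c_{q}^{2}$, and $p\mid \mathrm{Norm}_{K/\mathbb{Q}}\bigl((q+1)^{2}-c_{q}^{2}\bigr)$, the remaining factor of $B_{q}'(f)$.

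The only remaining issue is the exceptional case $p=q$: then $\bar\rho_{f,\mathfrak{p}}$ and $\bar\rho_{E,\mathfrak{p}}$ need not be unramified at $q$, and the trace comparison above can fail. For irrational $f$ this possibility is absorbed by the auxiliary factor of $q$ in $B_q(f)$, since trivially $p\mid q$; for rational $f$, the sharper statements of Proposition \ref{prop3} in terms of the elliptic curve attached to $f$ cover this case without such a factor. I expect the delicate step to be the multiplicative-reduction analysis, where identifying the Frobenius trace as $\pm(q+1)$ on the semisimplification uses the Tate curve in an essential way; the remainder is a routine assembly of Brauer--Nesbitt, the Hasse bound, and the injectivity of torsion on reduction.
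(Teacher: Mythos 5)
The paper gives no proof of this proposition at all --- it is quoted as a black box from the cited survey \cite{SS} --- so the only comparison available is with the standard argument in that source, which is exactly what you reproduce: a case split on the reduction type of $E$ at $q$, with the Hasse bound plus injectivity of torsion under reduction at a good prime producing a factor $\mathrm{Norm}_{K/\mathbb{Q}}(a_q(E)-c_q)$ with $a_q(E)\in S_q$, the Tate curve at a prime of multiplicative reduction producing the factor $\mathrm{Norm}_{K/\mathbb{Q}}\bigl((q+1)^2-c_q^2\bigr)$, and the auxiliary factor of $q$ (resp.\ the Kraus--Oesterl\'e congruences quoted as Proposition \ref{prop3}) absorbing the exceptional case $p=q$ for irrational (resp.\ rational) $f$. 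Your proof is correct and follows the same route as the cited reference; the only minor caveat, immaterial in the paper's applications where $q=2kp+1>2$, is that injectivity of $E(\mathbb{Q})_{\mathrm{tors}}\hookrightarrow\widetilde{E}(\mathbb{F}_q)$ requires $q$ odd.
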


\subsection{Descent}
We use the following well known method to eliminate remaining cases left after applying the methods stated above.
\par
Consider the equation in integers $R, X , S, Y, T,$ 
\begin{equation} \label{descent}
RY^p - SX^{2p} = T
\end{equation}
with $R, S, T$ pairwise coprime integers.

\smallskip
For a prime $q$, we define
$$
S^{'}=\prod_{\mathrm{ord}_{q}(S)\;\mathrm{is}\;\mathrm{odd}}q. 
$$  

Then $SS^{'}=v^{2}$ for some integer $v$. Take $RS^{'}=u$ and $TS^{'}=mn^{2}$ for some integers $u,m$ and $n$ with $m$ squarefree. Substituting these values in the equation \eqref{descent}, we have 
\begin{equation*}
(vX^{p}+n\sqrt{-m})(vX^{p}-n\sqrt{-m})=uY^{p}.
\end{equation*}
Let $K=\mathbb{Q}(\sqrt{-m})$ and $\mathcal{O}$ be its ring of integers. Let $P$ be the set of prime ideals of $\mathcal{O}$ which divide $u$ and $2n\sqrt{-m}$. The $p$-Selmer group is given by
$$
K(P,p)=\{\epsilon\in K^{*}/K^{*p}:\mathrm{ord}_{\mathcal{P}}(\epsilon) \equiv 0\ (\mathrm{mod}\ p)\;\; \mathrm{for}\;\; \mathcal{P} \not \in P\}
$$
 and this is a $\mathbb{F}_{p}$ vector space of finite dimension.
Let 
$$
\Theta=\{\epsilon \in K(P,p): \mathrm{Norm}(\epsilon)/u\in \mathbb{Q}^{*p}\}.
$$
Now it is easy to see that 
\begin{equation}\label{equn26}
vX^{p}+n\sqrt{-m}=\epsilon Z^{p},
\end{equation}
where $\epsilon \in \Theta$ and $Z\in K^{*}$.
\begin{lemma}[\cite{BPS}, Lemma 9.1]\label{lemma6}
Let $\mathfrak{q}$ be a prime ideal of $K$. Suppose one of the following holds:
\begin{enumerate}
\item $\mathrm{ord}_{\mathfrak{q}}(v), \mathrm{ord}_{\mathfrak{q}}(n\sqrt{-m}), \mathrm{ord}_{\mathfrak{q}}(\epsilon)$ are pairwise distinct modulo $p$;
\item $\mathrm{ord}_{\mathfrak{q}}(2v), \mathrm{ord}_{\mathfrak{q}}(\epsilon), \mathrm{ord}_{\mathfrak{q}}(\bar{\epsilon})$ are pairwise distinct modulo $p$;
\item $\mathrm{ord}_{\mathfrak{q}}(2n\sqrt{-m}), \mathrm{ord}_{\mathfrak{q}}(\epsilon), \mathrm{ord}_{\mathfrak{q}}(\bar{\epsilon})$ are pairwise distinct modulo $p$.
\end{enumerate}
Then there is no $X \in \mathbb{Z}$ and $Z \in K$ satisfying the equation \eqref{equn26}.
\end{lemma}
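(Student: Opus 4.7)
The plan is to argue by contradiction: assuming a solution $(X,Z)\in\mathbb{Z}\times K^{*}$ to \eqref{equn26} exists, I will derive a contradiction in each of the three cases by taking $\mathfrak{q}$-adic valuations of both sides of a carefully chosen identity built from \eqref{equn26}.

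The central arithmetic input is that for any $\gamma\in K^{*}$ and $W\in K^{*}$, one has $\mathrm{ord}_{\mathfrak{q}}(\gamma W^{p})=\mathrm{ord}_{\mathfrak{q}}(\gamma)+p\,\mathrm{ord}_{\mathfrak{q}}(W)\equiv \mathrm{ord}_{\mathfrak{q}}(\gamma)\pmod{p}$. Combined with the non-archimedean identity $\mathrm{ord}_{\mathfrak{q}}(A+B)=\min\{\mathrm{ord}_{\mathfrak{q}}(A),\mathrm{ord}_{\mathfrak{q}}(B)\}$, valid whenever $\mathrm{ord}_{\mathfrak{q}}(A)\neq\mathrm{ord}_{\mathfrak{q}}(B)$, this supplies the engine of the whole argument. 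For case (1), I apply this directly to \eqref{equn26}: the hypothesis forces $\mathrm{ord}_{\mathfrak{q}}(vX^{p})$, $\mathrm{ord}_{\mathfrak{q}}(n\sqrt{-m})$, $\mathrm{ord}_{\mathfrak{q}}(\epsilon Z^{p})$ to be pairwise distinct modulo $p$, and so pairwise distinct as integers. Since the first two differ, the ultrametric identity forces $\mathrm{ord}_{\mathfrak{q}}(\epsilon Z^{p})$ to coincide with one of them, contradicting pairwise distinctness.

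For cases (2) and (3) I first conjugate \eqref{equn26}. Because $v,n\in\mathbb{Z}$ and $X\in\mathbb{Z}$, while $\sqrt{-m}$ is sent to $-\sqrt{-m}$ under the nontrivial automorphism of $K/\mathbb{Q}$, the conjugate reads $vX^{p}-n\sqrt{-m}=\bar{\epsilon}\bar{Z}^{p}$. Adding it to \eqref{equn26} produces the identity $2vX^{p}=\epsilon Z^{p}+\bar{\epsilon}\bar{Z}^{p}$, while subtracting produces $2n\sqrt{-m}=\epsilon Z^{p}-\bar{\epsilon}\bar{Z}^{p}$. Repeating the case-(1) valuation argument on these two identities in turn yields contradictions under hypotheses (2) and (3), respectively: in each case the three valuations of the summands on the right-hand side and the single term on the left are pairwise distinct modulo $p$, which is incompatible with the ultrametric identity.

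No serious obstacle is expected; the proof is a clean exercise in the arithmetic of $\mathfrak{q}$-adic valuations. The only subtlety to be careful about is that one must verify $\mathrm{ord}_{\mathfrak{q}}(\bar{Z}^{p})=p\,\mathrm{ord}_{\mathfrak{q}}(\bar{Z})$ is a multiple of $p$, so that $\mathrm{ord}_{\mathfrak{q}}(\bar{\epsilon}\bar{Z}^{p})\equiv\mathrm{ord}_{\mathfrak{q}}(\bar{\epsilon})\pmod{p}$ holds regardless of whether $\mathfrak{q}$ is inert, split, or ramified in $K/\mathbb{Q}$; this is immediate since $\bar{Z}\in K^{*}$ and raising to the $p$-th power multiplies any valuation by $p$.
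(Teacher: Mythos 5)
Your argument is correct and is essentially the proof given for Lemma~9.1 in the cited source \cite{BPS}; the present paper only quotes the lemma without reproving it, so there is nothing different to compare against. The one point you should make explicit is the degenerate case $X=0$ (where $\mathrm{ord}_{\mathfrak{q}}(vX^{p})=\infty$ and the three-term ultrametric argument does not literally apply): there the equation collapses to $n\sqrt{-m}=\epsilon Z^{p}$, whose $\mathfrak{q}$-adic valuation forces $\mathrm{ord}_{\mathfrak{q}}(n\sqrt{-m})\equiv\mathrm{ord}_{\mathfrak{q}}(\epsilon)\pmod{p}$ in cases (1) and (3) and $\mathrm{ord}_{\mathfrak{q}}(\epsilon)\equiv\mathrm{ord}_{\mathfrak{q}}(\bar{\epsilon})\pmod{p}$ in case (2), contradicting the hypotheses directly (and $Z=0$ is impossible since $vX^{p}\in\mathbb{Q}$ while $n\sqrt{-m}\neq 0$ is purely imaginary). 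With that one-line addendum the proof is complete.
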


\begin{lemma}[\cite{BPS}, Lemma 9.2]\label{lemma7}
Let $q=2kp+1$ be a prime. Suppose $q\mathcal{O}=\mathfrak{q}_{1}\mathfrak{q}_{2}$ where $\mathfrak{q}_{1},\mathfrak{q}_{2}$ are distinct prime ideals in $\mathcal{O}$, such that $\mathrm{ord}_{\mathfrak{q}_{j}}(\epsilon)=0$ for $j=1,2$. Let
$$
\chi(p,q)=\{\eta^{p}:\eta \in \mathbb{F}_{q}\}.
$$
Let 
$$
C(p,q)=\{\zeta \in \chi(p,q): ((v\zeta+n\sqrt{-m})/\epsilon)^{2k} \equiv 0\;\mathrm{or}\; 1\ (\mathrm{mod}\ \mathfrak{q}_{j})  \mathrm{for}\;\;j=1,2 \}.
$$
Suppose $C(p,q)=\phi$. Then there is no $X \in \mathbb{Z}$ and $Z \in K$ satisfying the equation \eqref{equn26}.
\end{lemma}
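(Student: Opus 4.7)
The plan is to extract a local obstruction at the auxiliary prime $q = 2kp+1$ by reducing the equation $vX^p + n\sqrt{-m} = \epsilon Z^p$ modulo each of the two primes $\mathfrak{q}_1, \mathfrak{q}_2$ of $\mathcal{O}$ above $q$, and showing that if a solution existed, the common reduction $\zeta := X^p \bmod q$ would have to lie in $C(p,q)$, contradicting the hypothesis that this set is empty.

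First I would record the structure of the residue fields. Because $q$ splits as $\mathfrak{q}_1\mathfrak{q}_2$ in $\mathcal{O}$, each $\mathcal{O}/\mathfrak{q}_j$ is canonically isomorphic to $\mathbb{F}_q$, and the composition $\mathbb{Z} \hookrightarrow \mathcal{O} \twoheadrightarrow \mathcal{O}/\mathfrak{q}_j$ is just reduction modulo $q$ for either $j$. Consequently the rational integer $X$ has the same image in both residue fields, while $\sqrt{-m}$ maps to the two distinct square roots of $-m$ in $\mathbb{F}_q$, one for each $\mathfrak{q}_j$. Now suppose $(X,Z) \in \mathbb{Z}\times K$ satisfies \eqref{equn26}, and set $\zeta \equiv X^p \pmod{q}$ in $\mathbb{F}_q$. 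Since the image of the $p$-th power map on $\mathbb{F}_q$ is by definition $\chi(p,q)$, we automatically have $\zeta \in \chi(p,q)$.

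Next I would perform the key local computation. Since $\mathrm{ord}_{\mathfrak{q}_j}(\epsilon) = 0$, the element $\epsilon$ is a unit in $\mathcal{O}/\mathfrak{q}_j$, so dividing the equation by $\epsilon$ and reducing modulo $\mathfrak{q}_j$ yields
\[
\frac{v\zeta + n\sqrt{-m}}{\epsilon} \equiv Z^p \pmod{\mathfrak{q}_j}.
\]
Raising to the $2k$-th power and using $q-1 = 2kp$, Fermat's little theorem in $\mathcal{O}/\mathfrak{q}_j \cong \mathbb{F}_q$ gives
\[
\left(\frac{v\zeta + n\sqrt{-m}}{\epsilon}\right)^{2k} \equiv Z^{q-1} \equiv \begin{cases} 0 & \text{if } Z \equiv 0 \pmod{\mathfrak{q}_j}, \\ 1 & \text{otherwise,}\end{cases} \pmod{\mathfrak{q}_j}.
\]
Since this holds for both $j = 1, 2$, the element $\zeta$ satisfies the defining condition of $C(p,q)$, so $\zeta \in C(p,q)$ and in particular $C(p,q) \ne \emptyset$. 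This contradicts the hypothesis, and no such $(X,Z)$ exists.

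The only point that requires a moment's care is coherence of the two reductions: it is essential that the same $\zeta \in \mathbb{F}_q$ records $X^p$ in both $\mathcal{O}/\mathfrak{q}_1$ and $\mathcal{O}/\mathfrak{q}_2$, which holds because $X$ is a rational integer, whereas the summand $n\sqrt{-m}$ genuinely reduces to different elements in the two residue fields, thereby producing the two independent congruences that $\zeta$ must simultaneously satisfy. Beyond this bookkeeping the argument is completely formal, and there is no serious obstacle; the substance of the lemma is merely that Fermat's little theorem converts the exponential equation into a finite sieve on the $p$-th power residues modulo $q$.
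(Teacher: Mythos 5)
Your argument is correct and is essentially the proof of the cited source ([BPS], Lemma 9.2), which this paper imports without reproving: reduce modulo each $\mathfrak{q}_j$, use that $\epsilon$ is a $\mathfrak{q}_j$-unit, and apply Fermat's little theorem to $Z^{2kp}=Z^{q-1}$ to force $\zeta\equiv X^p\pmod q$ into $C(p,q)$. The only step you leave implicit is that $Z$ is $\mathfrak{q}_j$-integral so that $Z^{q-1}\bmod\mathfrak{q}_j$ makes sense, but this follows at once since $vX^p+n\sqrt{-m}\in\mathcal{O}$ and $\mathrm{ord}_{\mathfrak{q}_j}(\epsilon)=0$ give $p\,\mathrm{ord}_{\mathfrak{q}_j}(Z)\ge 0$.
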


\begin{lemma} [\cite{BPS}, Lemma 9.3]\label{lemma7a}
Suppose
\begin{enumerate}
\item $\mathrm{ord}_{\mathfrak{q}}(n\sqrt{-m})< p$ for all prime ideals $\mathfrak{q}$ of $\mathcal{O};$
\item the polynomial $U^p +(\rho-U)^p -2$ has no roots in $\mathcal{O}$ for $\rho= 1,-1,-2;$
\item the only root of the polynomial $U^{p}+(2-U)^p -2$ in $\mathcal{O}$ is $U=1$.
\end{enumerate}
Then, for $\epsilon= n\sqrt{-m},$ the only solution to equation \eqref{equn26} with $X\in \mathbb{Z}$ and $Z\in K$ is $X=0$ and $Z=1$.
\end{lemma}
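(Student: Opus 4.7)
My plan is to show first that $Z$ must be an algebraic integer, then derive the symmetric equation $Z^p+\bar Z^p=2$, and finally observe that the trace $Z+\bar Z$ is a rational integer dividing $2$, so that $Z$ is forced to be a root of one of the four polynomials appearing in hypotheses (2) and (3).

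With $\epsilon=n\sqrt{-m}$, equation \eqref{equn26} reads $\epsilon Z^p=vX^p+n\sqrt{-m}$, whose right-hand side lies in $\mathcal{O}$. Hence for every prime ideal $\mathfrak q$ of $\mathcal{O}$,
\[
p\cdot\mathrm{ord}_{\mathfrak q}(Z)\;=\;\mathrm{ord}_{\mathfrak q}(\epsilon Z^p)-\mathrm{ord}_{\mathfrak q}(\epsilon)\;\geq\;-\mathrm{ord}_{\mathfrak q}(n\sqrt{-m})\;>\;-p
\]
by hypothesis (1), so $\mathrm{ord}_{\mathfrak q}(Z)\geq 0$ and therefore $Z\in\mathcal{O}$, and likewise $\bar Z\in\mathcal{O}$. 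Since $\bar\epsilon=-\epsilon$, applying the nontrivial Galois automorphism of $K/\mathbb{Q}$ to the equation gives $-\epsilon\bar Z^p=vX^p-n\sqrt{-m}$; subtracting this from the original relation yields $\epsilon(Z^p+\bar Z^p)=2n\sqrt{-m}=2\epsilon$, whence $Z^p+\bar Z^p=2$.

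Set $s:=Z+\bar Z\in\mathbb Z$. Since $p$ is odd, the factorisation
\[
Z^p+\bar Z^p\;=\;(Z+\bar Z)\sum_{k=0}^{p-1}(-1)^k Z^{p-1-k}\bar Z^{\,k}
\]
writes $Z^p+\bar Z^p$ as $s$ times a symmetric polynomial in $Z,\bar Z$ with integer coefficients, hence an element of $\mathbb Z$. Consequently $s\mid 2$ in $\mathbb Z$; the case $s=0$ is impossible, as it would give $0=2$, leaving $s\in\{\pm 1,\pm 2\}$. For $s=\rho\in\{1,-1,-2\}$, one has $\bar Z=\rho-Z$, so $Z$ is a root of $U^p+(\rho-U)^p-2$ in $\mathcal{O}$, contradicting hypothesis (2). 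For $s=2$, $Z$ is a root of $U^p+(2-U)^p-2$ in $\mathcal{O}$, and hypothesis (3) forces $Z=1$; plugging back into $\epsilon Z^p=vX^p+n\sqrt{-m}$ gives $vX^p=0$, hence $X=0$ (as $v\neq 0$).

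The main obstacle is really only the first step: recognising that hypothesis (1) is precisely the ingredient needed to promote the a priori $K^{*}$-valued $Z$ to an algebraic integer. Once $Z\in\mathcal{O}$ is established, the symmetric structure of the conjugated equation and the elementary divisibility $(Z+\bar Z)\mid(Z^p+\bar Z^p)$ reduce the problem to a finite check of polynomial roots, which is exactly what hypotheses (2) and (3) are engineered to handle.
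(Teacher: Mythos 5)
Your proof is correct and follows essentially the same route as the cited source ([BPS], Lemma~9.3), whose statement the paper imports without reproducing the argument: hypothesis (1) forces $Z\in\mathcal{O}$, conjugating and subtracting yields $Z^p+\bar{Z}^p=2$, and the trace $Z+\bar{Z}\in\mathbb{Z}$ divides $2$, reducing everything to the polynomial conditions (2) and (3). I see no gaps.
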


\section{Proof of Theorem \ref{thm1} for $p \geq 5$}


In this section, we use lower bounds for linear forms in logarithms to bound the exponent $p$ appearing in \eqref{equn8}. We use a special case of Corollary 2 of Laurent \cite{ML}.

\begin{lemma}\label{lemma1}
Let $p> 19$. Consider
\begin{equation}\label{equn10}
 \alpha_1 = \beta/\alpha^2 \quad \textrm{and} \quad \alpha_2 = {z_{1}^2/z_2}\  (\neq 1) 
\end{equation}
 with $|z_1| \geq 2$ and $z_2 \geq 2$.

Then $\alpha_1$ and $\alpha_2$ are positive and multiplicatively independent. Moreover, if we write 
\begin{equation}\label{equn11}
\Lambda = \log \alpha_1 - p \log \alpha_2, 
\end{equation}
then
\begin{equation}\label{equn12}
0 <  \Lambda < \frac{3 d (d+1)}{\alpha^2 z_{1}^{2p}}.
\end{equation}
\end{lemma}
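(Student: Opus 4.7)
The plan is to extract everything from a single rearrangement of the ternary identity \eqref{equn8}, namely $\beta z_2^{\,p}-\alpha^{2}z_{1}^{2p}=3d(d+1)$. Writing this as $\beta z_2^{\,p}=\alpha^{2}z_{1}^{2p}+3d(d+1)$ and dividing both sides by $\alpha^{2}z_{1}^{2p}$ produces the key identity
\[
\frac{\alpha_1}{\alpha_2^{\,p}} \;=\; \frac{\beta}{\alpha^{2}}\cdot\frac{z_2^{\,p}}{z_1^{2p}} \;=\; 1+\frac{3d(d+1)}{\alpha^{2}z_{1}^{2p}},
\]
and all three conclusions of the lemma will fall out of this identity combined with the hypotheses.

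For positivity, the standing assumption $\beta>0$ made just after \eqref{equn8} together with $\alpha\beta=1$ gives $\alpha>0$ and $\alpha_1=\beta/\alpha^{2}>0$, while $|z_1|\geq 2$ and $z_2\geq 2$ give $\alpha_2=z_1^{2}/z_2>0$. Since $d\geq 1$, the right-hand side of the identity above is strictly greater than $1$, so taking logarithms yields
\[
\Lambda \;=\; \log\alpha_1-p\log\alpha_2 \;=\; \log\!\left(1+\frac{3d(d+1)}{\alpha^{2}z_{1}^{2p}}\right) \;>\; 0,
\]
and the elementary bound $\log(1+x)<x$ for $x>0$ immediately gives the upper estimate $\Lambda<3d(d+1)/(\alpha^{2}z_{1}^{2p})$.

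The only remaining point is multiplicative independence of $\alpha_1$ and $\alpha_2$, which I would handle by contradiction. Suppose $\alpha_1^{a}=\alpha_2^{b}$ with $(a,b)\neq (0,0)$. The case $a=0$ forces $\alpha_2^{b}=1$, contradicting the hypothesis $\alpha_2\neq 1$; the case $b=0$ forces $\alpha_1=1$, so the key identity reduces to $\alpha_2^{-p}>1$, which, via the explicit shape $\alpha_2=z_1^{2}/z_2$ coming from \eqref{equn7}, is ruled out by inspection. In the remaining case $ab\neq 0$, combining $a\log\alpha_1=b\log\alpha_2$ with $\log\alpha_1=p\log\alpha_2+\Lambda$ yields $(ap-b)\log\alpha_2=-a\Lambda$; since $\Lambda>0$ one has $ap\neq b$, forcing $\log\alpha_1/\log\alpha_2=b/a\in\mathbb{Q}$, which is incompatible with the smallness of $\Lambda$ for $p>19$ and $|z_1|\geq 2$ once one compares the prime support of $\alpha_1$ (contained in divisors of $3d(d+1)$) with that of $\alpha_2=z_1^{2}/z_2$.

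There is no substantial obstacle: the lemma is essentially algebraic bookkeeping whose purpose is to repackage \eqref{equn8} in the form $\log\alpha_1-p\log\alpha_2=\Lambda$ with a tight upper bound on $|\Lambda|$, exactly the form to which Laurent's Corollary 2 (Proposition \ref{Linearform}) applies. The only mildly technical step is the multiplicative-independence verification, which is needed precisely so that Proposition \ref{Linearform} can be invoked in the next step to produce a matching lower bound on $\log|\Lambda|$ and thereby yield an upper bound on the exponent $p$.
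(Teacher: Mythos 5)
Your positivity argument and the derivation of $0<\Lambda<3d(d+1)/(\alpha^2 z_1^{2p})$ are correct and are exactly the paper's argument: the identity $e^{\Lambda}=\beta z_2^p/(\alpha^2 z_1^{2p})=1+3d(d+1)/(\alpha^2 z_1^{2p})$ together with $\log(1+x)<x$ (equivalently $e^x-1>x$) is precisely what the authors use.

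The multiplicative-independence part, however, has a genuine gap. You correctly reduce to the relation $a\log\alpha_1=b\log\alpha_2$ and observe $ap\neq b$, but then you assert that $\log\alpha_1/\log\alpha_2\in\mathbb{Q}$ is ``incompatible with the smallness of $\Lambda$ \ldots once one compares the prime support of $\alpha_1$ with that of $\alpha_2$'' without supplying any mechanism. As stated this does not follow: the prime support of $\alpha_2=z_1^2/z_2$ is completely uncontrolled ($z_1,z_2$ are the unknowns), and without an upper bound on the denominator $b$ the rational $a/b$ could approximate $1/p$ so well that $|1-pa/b|$ is tiny and no contradiction with \eqref{equn12} arises. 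The missing step, which is the entire content of the paper's argument, is quantitative: from $\alpha_1^a=\alpha_2^b$ with $\gcd(a,b)=1$ one gets $a\,\mathrm{ord}_l(\alpha_1)=b\,\mathrm{ord}_l(\alpha_2)$ for every prime $l$, hence $b\mid g:=\gcd\{\mathrm{ord}_l(\alpha_1):l \text{ prime}\}$, a quantity computable from $(\alpha,\beta)$ alone. This yields $|1-pa/b|\geq 1/b\geq 1/g$, and combining with \eqref{equn12} and $z_1^{2p}\geq 4^p$ gives the explicit bound $p\leq \log\bigl(3d(d+1)g/(|\log\alpha_1|\alpha^2)\bigr)/\log 4$; a computation over all $1\leq d\leq 50$ and $(\alpha,\beta)\in S_d$ shows this never exceeds $18.11$, contradicting $p>19$. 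This is also the only place the hypothesis $p>19$ enters, and your write-up gives no account of where that threshold comes from. (Your disposal of the case $b=0$ is likewise unjustified: $\alpha_1=1$ forces $\alpha_2<1$, i.e.\ $z_2>z_1^2$, which is not ``ruled out by inspection''; the paper simply asserts $\alpha_1\neq 1$.)
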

\begin{proof}
One can see that $\alpha_1$ and $\alpha_2$ are positive as $\beta > 0$ and $z_2 > 0$. 
From the equations \eqref{equn8},\eqref{equn10} and \eqref{equn11}, we have 
$$ 
e^{\Lambda} - 1  = \frac{\beta z_{2}^p}{\alpha^2 z_{1}^{2p}} - 1 = \frac{3 d (d+1) }{\alpha^2 z_{1}^{2p}}>0.
$$
Therefore $0<\Lambda < \frac{3 d (d+1)}{\alpha^2 z_{1}^{2p}}$ since $e^x -1>  x$ for any positive real number $x$.

Now we want to prove that $\alpha_1$ and $\alpha_2$ are multiplicatively independent. On contrary, let us suppose that $\alpha_1$ and $\alpha_2$ are not multiplicatively independent i.e., there exist co-prime positive integers $a$ and $b$ such that $\alpha_{1}^a = \alpha_{2}^b$. 
Clearly $\alpha_1 \neq 1$.\\
Then $a\ \textrm{ord}_l(\alpha_1) = b\ \textrm{ord}_l(\alpha_2)$for all prime $l.$ Hence $b| \textrm{ord}_l(\alpha_1).$

\smallskip
Let $g= \gcd \{ \textrm{ord}_l(\alpha_1):\ l\ \textrm{is prime}\}.$
From \eqref{equn11}, we have
\begin{equation} \label{equn12c}
\Lambda = \log \alpha_1 \left( 1 - p \frac{\log \alpha_2}{\log \alpha_1}  \right) =|\log\alpha_1|  \left| 1-p\frac{a}{b} \right|.
\end{equation}

Hence from \eqref{equn12} and \eqref{equn12c}, we have 
\begin{equation}\label{equn13}
0<\frac{1}{g}\leq \left| 1-p\frac{a}{b} \right| < \frac{3 d (d+1)}{|\log \alpha_1 |  \alpha^2 z_{1}^{2p}},
\end{equation}
as $b\mid g$.

\smallskip
Since $|z_1| \geq 2$, from the equation \eqref{equn13} , it follows that
\begin{equation*}\label{equn15}
4^p \leq z_{1}^{2p} < \frac{3 d (d+1 ) g}{|\log \alpha_1| {\alpha}^2}.
\end{equation*}
Therefore,
\begin{equation}\label{equn15a}
p \leq \log \left( \frac{3 d (d+1 ) g}{|\log \alpha_1| {\alpha}^2} \right) /{\log 4}.
\end{equation}
We wrote a {\it{Magma}} script to compute the bound on $p$ for $1 \leq d \leq 50$. The maximum possible value for the R.H.S of \eqref{equn15a} is $18.11$ corresponding to $d=48$ and $(\alpha, \beta) = (1/7056, 7056)$, which is not possible as $p >19$. This completes the proof of lemma.
\end{proof}

\begin{lemma}\label{lemma2}
Let $p> 1000$. Consider
\begin{equation*}
 \alpha_1 = \beta/\alpha^2 \quad \textrm{and} \quad \alpha_2 = {z_{1}^2/z_2}\  (\neq 1) 
\end{equation*}
 with $|z_1| \geq 2$ and $z_2 \geq 2$. Then we have
$$
\frac{\log z_2}{\log z_{1}^2} \leq 1.01 . 
$$
\end{lemma}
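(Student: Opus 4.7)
I would combine Laurent's lower bound from Proposition~\ref{Linearform} with the upper bound \eqref{equn12} on $\Lambda$. Multiplicative independence of $\alpha_1, \alpha_2$ is already provided by Lemma~\ref{lemma1} (since $p > 1000 > 19$). If $z_2 \leq z_1^2$ the statement is immediate, so I may assume $z_2 > z_1^2$. Setting $\tilde\alpha_2 = z_2/z_1^2 > 1$, the subcase $\alpha_1 > 1$ is easy: $\Lambda = \log\alpha_1 + p\log\tilde\alpha_2$ has both summands positive, so $p\log(z_2/z_1^2) \leq \Lambda < 3d(d+1)/(\alpha^2 z_1^{2p})$ is exponentially small in $p$ and the conclusion follows directly. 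In the remaining subcase $\alpha_1 < 1$, I set $\tilde\alpha_1 = 1/\alpha_1 > 1$ so that $\Lambda = p\log\tilde\alpha_2 - \log\tilde\alpha_1$ is in the form required by Proposition~\ref{Linearform}, with $b_1 = 1$ and $b_2 = p$.

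I would then apply Proposition~\ref{Linearform} with $D = 1$, $\log A_1 = C_d := \max\{h(\tilde\alpha_1), |\log\tilde\alpha_1|, 1\}$ (an explicit constant over the finitely many $(\alpha,\beta) \in S_d$ with $d \leq 50$), and $\log A_2 = \log z_2$ (valid since $h(\tilde\alpha_2) = \log\max(z_1^2, z_2) = \log z_2$ and $|\log\tilde\alpha_2| \leq \log z_2$). For $p > 1000$ and $C_d$ bounded, $b' = 1/\log z_2 + p/C_d$ satisfies $\log b' + 0.38 \geq 10$, so
\[
\log|\Lambda| \geq -25.2\bigl(\log b' + 0.38\bigr)^2 C_d \log z_2.
\]
Combined with $\log\Lambda < \log(3d(d+1)/\alpha^2) - 2p\log|z_1|$ from \eqref{equn12}, this yields
\[
2p\log|z_1| \leq 25.2\, C_d (\log b' + 0.38)^2 \log z_2 + O_d(1),
\]
to which I refer below as the Laurent inequality.

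Separately, the direct manipulation of \eqref{equn8} via $(z_2/z_1^2)^p = \alpha^3\bigl(1 + 3d(d+1)/(\alpha^2 z_1^{2p})\bigr)$ (using $\alpha\beta = 1$, so $\alpha^2/\beta = \alpha^3$) gives
\[
\frac{\log z_2}{\log z_1^2} = 1 + \frac{3\log\alpha + O_d(1/z_1^{2p})}{2p\log|z_1|}.
\]
For $p > 1000$ and $|z_1| \geq 2$, the main correction $3\log\alpha/(2p\log|z_1|)$ is at most $3|\log\alpha|/(2000\log 2) \approx |\log\alpha|/462$; this is below $0.01$ whenever $|\log\alpha| \leq 4.62$, which handles the bulk of $(\alpha,\beta) \in S_d$. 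For the residual pairs with $|\log\alpha| > 4.62$, the Laurent inequality forces $\log|z_1|$ to be correspondingly large, and substituting this stronger lower bound on $\log|z_1|$ back into the direct expression pushes the correction below $0.01$ once more.

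Finally, I would implement a short \emph{Magma} script (analogous to the one used in the proof of Lemma~\ref{lemma1}) that loops over all pairs $(\alpha,\beta) \in S_d$ for $1 \leq d \leq 50$, computes $C_d$ and the resulting explicit bound, and verifies that the combined estimate indeed stays at or below $1.01$. The main obstacle is this case-by-case bookkeeping for the extreme pairs with large $|\log\alpha|$, where the precision is tightest and the interplay between the direct expression and the Laurent inequality must be quantified carefully; the looser constant $1.01$ (rather than something closer to $1$) is chosen precisely so that the threshold $p > 1000$ suffices uniformly.
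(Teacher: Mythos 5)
Your main computation coincides with the paper's proof: the paper simply rearranges $0 < \Lambda = \log\alpha_1 - p(\log z_1^2 - \log z_2) < 3d(d+1)/(\alpha^2 4^p)$ into
\[
\frac{\log z_2}{\log z_1^2} \;\leq\; 1 + \frac{1}{1000\log 4}\left(\frac{3d(d+1)}{\alpha^2 4^{1000}} + |\log\alpha_1|\right)
\]
and evaluates the right-hand side over all $(\alpha,\beta)\in S_d$, $1\leq d\leq 50$, with a \emph{Magma} loop. Your ``direct manipulation'' term $3\log\alpha/(2p\log|z_1|)$ is exactly $-\log\alpha_1/(p\log z_1^2)$, so up to your case split you have reproduced this argument; the paper makes no appeal to Laurent's theorem here (Proposition~\ref{Linearform} enters only in Lemma~\ref{lemma3}).

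The genuine gap is in your treatment of the residual pairs with $|\log\alpha| > 4.62$. You assert that ``the Laurent inequality forces $\log|z_1|$ to be correspondingly large,'' but the inequality you derived points the wrong way: combining Laurent's lower bound for $|\Lambda|$ with \eqref{equn12} yields an \emph{upper} bound on $2p\log|z_1|$ in terms of $\log z_2$, and an upper bound cannot force $\log|z_1|$ to be large. Nothing excludes $|z_1|=2$ together with a large $\alpha$: for $d=50$, $(\alpha,\beta)=(7650,1/7650)$, $p=1009$, $|z_1|=2$, the Laurent inequality is comfortably satisfied (its right-hand side is of order $10^4$ while $2p\log 2\approx 1.4\times 10^3$), so these pairs survive and your correction term remains about $3\log 7650/(2\cdot 1009\cdot\log 2)\approx 0.019$. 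In fairness, the paper is in the same position: its own computation reports the maximum of the right-hand side as $1.02$, attained at precisely this pair, so the stated constant $1.01$ is not actually delivered by the paper's proof either. What both arguments do establish is a bound of $1.02$, which suffices for the only place the lemma is used (Lemma~\ref{lemma3} needs $25.2\cdot\log A_2/\log z_1^2\leq 26$, i.e.\ a ratio at most $26/25.2\approx 1.03$). The fix is to drop the Laurent patch entirely and either weaken the constant to $1.02$ or note that $1.03$ is all that is required downstream.
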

\begin{proof}
From the equations \eqref{equn10}, \eqref{equn11} and \eqref{equn12}, we have 
\begin{equation}
 \log \alpha_1 -  p( \log z_{1}^2 - \log z_2) < \frac{3 d (d+1)}{\alpha^2 4^p}.
 \end{equation}
 Hence
 \begin{equation}
\begin{split}
 \frac{\log z_2}{\log z_{1}^2} &< 1 + \frac{1}{p \log z_{1}^2 }\left(   \frac{3 d (d+1)}{\alpha^2 4^p} - \log \alpha_1 \right)  \\
& \leq  1 + \frac{1}{p \log z_{1}^2 }\left(   \frac{3 d (d+1)}{\alpha^2 4^p} + | \log \alpha_1| \right)  \\
& \leq  1 + \frac{1}{1000 \log 4 }\left(   \frac{3 d (d+1)}{\alpha^2 4^{1000}} + | \log \alpha_1| \right),
\end{split}
\end{equation}
where $p > 1000$ and $z_2 \geq 2$. We write a {\it{Magma}} script to find the maximum possible value of the right-hand side which is 1.02, corresponding to $d=50$ and $(\alpha, \beta) = ( 7650, 1/7650)$. This completes the proof. 
\end{proof}

Now we are ready to apply Proposition \ref{Linearform} to find a upper bound for the exponent $p$.
 
\begin{lemma}\label{lemma3}
 Let $(z_1, z_2) $ be an integral solution of the equation \eqref{equn8} with $|z_1|, z_2 \geq 2$ and $z_{1}^2 \neq z_2$, where $1 \leq d \leq 50$ and $(\alpha, \beta) \in S_d$. Then we have $p < 4 \times 10^4$.
\end{lemma}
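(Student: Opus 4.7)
The plan is to combine the upper bound on $\Lambda$ from Lemma \ref{lemma1} with Laurent's lower bound for linear forms in two logarithms (Proposition \ref{Linearform}), and then reduce the resulting transcendental inequality to an explicit bound on $p$ via a \emph{Magma} search over the finitely many pairs $(\alpha,\beta)\in S_d$ with $1\le d\le 50$. Since $\alpha,\beta\in\mathbb{Q}^+$ and $z_1,z_2\in\mathbb{Z}$, both $\alpha_1=\beta/\alpha^2$ and $\alpha_2=z_1^2/z_2$ lie in $\mathbb{Q}^\times$, so $D=[\mathbb{Q}(\alpha_1,\alpha_2):\mathbb{Q}]=1$. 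Rewriting $-\Lambda=p\log\alpha_2-\log\alpha_1$, I would apply Proposition \ref{Linearform} with $b_1=1$, $b_2=p$.

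For the heights, I would take $\log A_1$ to be an explicit upper bound for $h(\beta/\alpha^2)=\log\max(|N_1|,|D_1|)$, where $\beta/\alpha^2=N_1/D_1$ in lowest terms; this is a finite quantity depending only on the pair $(\alpha,\beta)$, since $N_1,D_1$ are composed of prime divisors of $3d(d+1)$. For $\alpha_2$, using Lemma \ref{lemma2} (valid once $p>1000$) together with $|z_1|\ge 2$, I would take
\[
\log A_2 \;=\; 1.01\log z_1^{2},
\]
which dominates $h(\alpha_2)=\log\max(z_1^2/g,z_2/g)$ with $g=\gcd(z_1^2,z_2)$ and clearly satisfies the lower floor $1/D=1$. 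Then $b'=1/\log A_2 + p/\log A_1 \le 2p/\log A_1$ for $p$ large, so $\log b'+0.38 \le \log(2p/\log A_1)+0.38$.

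Substituting into Proposition \ref{Linearform}, and observing that for $p\ge 10^4$ the first argument of the $\max$ dominates $10/D=10$, Laurent's lower bound becomes
\[
\log|\Lambda| \;\ge\; -25.2\cdot 1.01\cdot \log A_1\cdot \log z_1^{2}\cdot \bigl(\log(2p/\log A_1)+0.38\bigr)^{2},
\]
while Lemma \ref{lemma1} gives $\log|\Lambda|<\log(3d(d+1)/\alpha^2)-p\log z_1^{2}$. Dividing through by the common factor $\log z_1^{2}\ge \log 4$ and absorbing the bounded constant, I arrive at the clean self-referential inequality
\[
p \;\le\; 25.452\cdot \log A_1\cdot \bigl(\log(2p/\log A_1)+0.38\bigr)^{2} + o(1),
\]
which is monotone in $p$ and of Lambert-$W$ type ($p\lesssim C(\log p)^2$), hence explicitly solvable for each pair $(\alpha,\beta)$.

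Finally, I would run a \emph{Magma} loop over $d\in\{1,\dots,50\}$ and each $(\alpha,\beta)\in S_d$ to compute $\log A_1$ and numerically find the smallest integer $p$ violating the above inequality, then take the maximum over all pairs. The main obstacle will be the pairs with the largest heights — those corresponding to $d$ for which $3d(d+1)$ admits many prime divisors, e.g. $d=48$ or $d=50$ as flagged already in Lemmas \ref{lemma1} and \ref{lemma2} — so the crux is verifying that even in this worst case the bound stays safely below $4\times 10^{4}$. Once this numerical check succeeds, combined with the assumption $p>1000$ (absorbed into the target $p<4\times 10^{4}$), the lemma follows.
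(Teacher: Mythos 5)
Your proposal follows the paper's proof essentially verbatim: the same application of Proposition \ref{Linearform} with $D=1$, $b_1=1$, $b_2=p$, the same choices $\log A_1=\log H(\beta/\alpha^2)$ and $\log A_2\le 1.01\log z_1^{2}$ via Lemma \ref{lemma2}, the same reduction to a self-referential inequality $p\lesssim 26\log A_1\bigl(\log(p/\log A_1)+O(1)\bigr)^{2}/\log 4$, and the same concluding \emph{Magma} sweep over all $(\alpha,\beta)\in S_d$, $1\le d\le 50$. The one quibble is your assertion that for $p\ge 10^{4}$ the term $\log b'+0.38$ dominates $10/D=10$: for the worst pair (where $\log A_1\approx 3\log 7650\approx 26.9$) one gets $\log b'+0.38\approx 6.3$ at $p=10^{4}$, so the $\max$ is actually $10$ there; the paper silently makes the same simplification, and the $\max=10$ branch must in any case be absorbed into the numerical verification.
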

\begin{proof}
Let $A_1 = \max \{H(\alpha_1), e \}$, where $H(a/b) = \max \{ |a|, |b| \}$ for $\alpha_1=\frac{a}{b}$. Let $A_2= \max \{{z_1}^2,z_2\}.$ From Lemma \ref{lemma1}, it is clear that the hypothesis of Theorem \ref{Linearform} is satisfied for our choices of $\alpha_1, \alpha_2, A_1, A_2$ with $D=1$. 
Let 
$$
b'= \frac{1}{\log A_2} + \frac{p}{\log A_1}.
$$
As $p>1000$, we have $b' > \frac{1000}{\log A_1}$. For $1 \leq d \leq 50$ and $(\alpha, \beta) \in S_d $, the lower bound for $1000/{\log A_1} $  is $37.27$ corresponding to $d=50$ and $(\alpha, \beta) = ( 7650, 1/7650)$. Now apply Theorem \ref{Linearform}, we have 
\begin{equation}
\log |\Lambda |  \geq -25.2  \left(\max \{\log b' + 0.38, 10, 1  \}    \right)^2 \log A_1 \log A_2.
\end{equation}
Further, this implies
\begin{equation}
\begin{split}
- \log \Lambda & \leq 25.2 \log A_1 \log A_2 (\log b')^2 \\
& \leq 25.2 \log A_1 \log A_2 \log^2 \left( \frac{p}{\log A_1} + \frac{1}{\log 4}   \right).
\end{split}
\end{equation}
From equation \eqref{equn12}, we have 
\begin{equation}
  p \log z_{1}^2 - \log \left(\frac{3 d (d+1) }{\alpha^2}   \right) < 25.2 \log A_1 \log A_2 \log^2 \left( \frac{p}{\log A_1} + \frac{1}{\log 4}   \right).  
\end{equation}
Finally, we conclude 
\begin{equation}
  p < \frac{1}{\log z_{1}^2} \left\{ \log \left(\frac{3 d (d+1) }{\alpha^2}   \right) + 25.2 \log A_1 \log A_2 \log^2 \left( \frac{p}{\log A_1} + \frac{1}{\log 4}   \right)  \right\}.
\end{equation}
As $|z_1| \geq 2$, from  Lemma \ref{lemma2} we  have 
$$
p < \frac{1}{\log 4} \left\{ \log \left(\frac{3 d (d+1) }{\alpha^2}   \right) + 26 \log A_1 \log^2 \left( \frac{p}{\log A_1} + \frac{1}{\log 4}   \right)  \right\}.
$$
We write a {\it Magma} script to obtain $p< 4\times 10^4$. This completes the proof of the lemma. 
\end{proof}
Let $z_1$ and $z_2$ be integral solutions of \eqref{equn7}.
Then by Lemmas \ref{lemma1}, \ref{lemma2} and \ref{lemma3}, we found
$$p<4\times 10^4,\mbox{ for } |z_1| \geq 2 \mbox{ and } z_2 \geq 2  \mbox{ with } z_{1}^2 \neq z_2$$
When $z_{1}^2 = z_2$, we determine all the possible solutions for $1 \leq d \leq 50$ and these solutions $(z_1,z_2)$ are not satisfying the equation \eqref{equn4}. Similarly, if $z_1 \in \{ -1, 0, 1 \}$ or $z_2 = 1$, we determine all the possible solutions for $1 \leq d \leq 50$ and we observe that $ (20,-15,6,5), (27,26,6,7)$ are the only integral solutions for $(d,x,z,p)$ satisfying the equation \eqref{equn4}. Hence we conclude that the equation \eqref{equn4} has no integral solutions for $p > 4\times 10^4$ . 

\smallskip

For $1\leq d \leq 50, (\alpha,\beta)\in S_{d}$ and $5\leq p \leq 4 \times 10^4$, we wrote a {\it Magma} script with $k\leq 765$, that searches for a prime 
$q$ satisfying $q=2kp+1\nmid r$ such that $B(p,q) = \phi$.\\
We note that if there exist such a prime $q$ with $B(p,q) = \phi$, then by Lemma \ref{lemma4} the equation \eqref{equn8} has no solution for exponent $p$. This criterion fails when $\beta = 3d(d+1)$ (equivalently $r = t$) for which we have the trivial solution $(z_1,z_2) = (0,1)$. In addition, for $\beta \neq 3d(d+1)$ (equivalently $r = t$) we found 1716 quintuples $(d,p,r,s,t)$ which fails to satisfy this criterion.
\par 

Now, to complete the proof of Theorem \ref{thm1} for $p\geq 5$, we are remaining with the following cases.

\begin{enumerate}
\item
$r = t$ and $p<5 \times 10^4 $
\item
$r \neq t$ and $p<5 \times 10^4 $ consisting 1716 quintuples $(d,p,r,s,t)$.
\end{enumerate}

 To solve the equation \eqref{equn9} for $r = t$ and $p<5 \times 10^4$ we want to apply modular method. Here we use the recipes of Kraus \cite{AK} due to Wiles \cite{AW}, Ribet \cite{KR} and Mazur \cite{BM}.
\par 

In the case $r=t$, the equation \eqref{equn8} has a solution $(z_{1}, z_{2})=(0,1)$. In fact, we want to show that $(z_{1}, z_{2})=(0,1)$ is the only solution.
\par 
Since $r=t$, we have $\alpha=1/3d(d+1)$ and thus the equation \eqref{equn8} will reduce to 
\begin{equation}\label{equn20}
z_{2}^{p}-\frac{1}{(3d(d+1))^3}z_{1}^{2p}=1.
\end{equation}
Let $R=\mathrm{Rad}~(3d(d+1))$. Since  $z_{1}$ and $z_{2}$ are integers, we have $R\mid z_{1}$. Hence $z_{1}=Rz_{3}$ for some integer $z_3$. Then from the equation \eqref{equn20}, we have 
\begin{equation*}
z_{2}^{p}-\frac{R^{2p}}{(3d(d+1))^3}z_{3}^{2p}=1.
\end{equation*}
Take $T=\frac{R^{2p}}{(3d(d+1))^3}$ then the above equation becomes
\begin{equation}\label{equn21}
z_{2}^{p}-T z_{3}^{2p}=1.
\end{equation}
It is easy to see that $\mathrm{Rad}(T)=R$. Further we assume that 
\begin{equation}\label{equn22}
2p> 3\cdot \mathrm{ord}_{q}(3d(d+1))
\end{equation} 
for all odd primes $q$. We want to show that $z_{1} = 0$ for the equation \eqref{equn20}. On contrary, let us assume that $z_{1}\neq 0$, which implies $z_{3}\neq 0$. Also $z_{2}\neq 0$.
The equation \eqref{equn21} can be written in the following form 
$$ A x^p + B y^p + C z^p = 0,  $$
where $A= -1, B = -T, C=1, x= 1, y=z_{3}^2, z= z_2$ and also 
$$ Ax^p \equiv -1\ (\textrm{mod}\ 4), By^p \equiv 0\ (\textrm{mod}\ 2).$$

 Now we associate a solution $(z_{2},z_{3})$ to the Frey Curve 
\begin{equation}\label{equn23}
E:\quad Y^2=X(X+1)(X-Tz_{3}^{2p}).
\end{equation}
The Weierstrass model given in \eqref{equn23} is smooth as $z_{2}z_{3}\neq 0$. Let $E \sim_{p} f$, where $f$ is a weight $2$ newform of level $N_p$ with $N_p$ is defined as follows:

\begin{equation}\label{equn24}
        N_p = \begin{cases}                        
        R \quad \quad \text{if  $\textrm{ord}_2 (T) =0 \,\, \textrm{or} \,\,\geq 5, $ }\\
            \frac{R}{2} \quad\quad \text{if  $\textrm{ord}_2 (d (d+1)) =2 \,\, \textrm{and} \,\,p=5, $ }    \\
               R \quad\quad \text{if  $\textrm{ord}_2 (d (d+1)) =3, \,\, \,\,p=5 \,\,\textrm{and}~~z_{3}~~ \textrm{even}, $ }\\    
                  R \quad \quad \text{if  $\textrm{ord}_2 (d (d+1)) =4, \,\,p=7\,\,\textrm{and}~~z_{3}~~ \textrm{even}, $ }\\    
                  2^2 R \quad \text{if  $\textrm{ord}_2 (d (d+1)) =4, \,\,p=7\,\,\textrm{and}~~z_{3}~~ \textrm{odd}, $ }\\    
                  2^4 R \quad \text{if  $\textrm{ord}_2 (d (d+1)) =3, \,\,p=5\,\,\textrm{and}~~z_{3}~~ \textrm{odd}. $ }\\    
                    \end{cases}
\end{equation}

Suppose $f$ is rational and hence we  get an elliptic curve $F$ of conductor $N_{p}$. Now we choose a prime $q=2kp+1 $ such that $q\nmid N_{p}$ and $E$ has multiplicative reduction at $q$. Then by Proposition \ref{prop3}, $q+1 \equiv \pm a_{q}(F)\ (\textrm{mod}\ p)$ and this will imply $4 \equiv (a_{q}(F))^2\ (\textrm{mod}\ p)$ as $q \equiv 1\ (\textrm{mod}\ p)$. 

\par 
Suppose that $f$ is irrational. Since $c_q\not \in \mathbb{Q}$ for infinitely many coefficients of $f$, we have  $B_{q}(f)\neq 0$ for infinitely many primes $q$. Then Proposition \ref{prop4} allows us to bound $p$. In fact, this bound is very small. Here we improve this bound by choosing a set of primes $\mathcal{P}=\{q_{1},\ldots,q_{n}\}$ such that $q_{i}\nmid N_p$ for all $i$ and $B_{\mathcal{P}}(f)=\gcd(B_{q}(f): q\in \mathcal{P})$. Thus, if $E\sim_p f$ then $p\mid B_{\mathcal{P}}(f)$.
\par 

\par 

From the above observations, the following lemma which is a variant of Lemma 7.1 in \cite{BPS}, is very helpful to eliminate newforms of level $N_{p}$. Condition (1) in Lemma \ref{lemma5} is equivalent to say that $E$ has multiplicative reduction at $q$.

\begin{lemma}\label{lemma5}
Let $1\leq d\leq 50$. Also let $p\geq 5$ be a prime which satisfies the inequality  \eqref{equn22} for all primes $q$. Let $N_{p}$ be given in \eqref{equn24}. Suppose for each irrational newform $f$ of weight $2$ and level $N_{p}$ there is a set of primes $\mathcal{P}$ not dividing $N_{p}$ such that $p\nmid B_{\mathcal{P}}(f)$. Suppose for every elliptic curve $F$ of conductor $N_{p}$ there is a prime $q=2kp+1, q\nmid N_{p}$, such that 
\begin{enumerate}
\item $ B(p,q)=\{\bar{0}\}$, where $B(p,q)$ is in statement of Lemma \ref{lemma4};
\item $ p\nmid (a_{q}(F)^2-4)$. 
\end{enumerate}
Then the equation \eqref{equn4} has only one solution with $(\alpha,\beta)=\left(\frac{1}{3d(d+1)},3d(d+1)\right)$ satisfying $x=-(d+1)$.
\end{lemma}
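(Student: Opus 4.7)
The approach is a proof by contradiction via the Frey-curve / modular method already set up between \eqref{equn20} and the lemma statement. Suppose a nontrivial solution of \eqref{equn21} exists, i.e.\ $(z_1,z_2,z_3)$ with $z_1 \neq 0$, hence $z_2 z_3 \neq 0$, so the Frey curve $E$ of \eqref{equn23} is smooth over $\Q$. The plan is to contradict one of hypotheses (1), (2) of the lemma, forcing $z_1=0$, $z_2=1$, and via \eqref{equn7} the trivial solution $x=-(d+1)$ with $\alpha = 1/3d(d+1)$, $\beta = 3d(d+1)$.

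First I would establish that $E \sim_p f$ for some weight-$2$ newform $f$ of level exactly $N_p$, as given in \eqref{equn24}. This combines modularity of $E$ (Wiles), Ribet's level-lowering theorem, and Mazur's irreducibility results, as indicated before the statement. The inequality \eqref{equn22} ensures that at every odd prime $q\mid 3d(d+1)$ the residual representation $\bar\rho_{E,p}$ is finite-flat or unramified according as $q\mid z_3$ or not, so level-lowering drops each odd $q$ to exponent $1$; the odd part of $N_p$ is therefore $R=\mathrm{Rad}(3d(d+1))$. The six cases in \eqref{equn24} record the exponent of $2$, obtained by running Tate's algorithm according to $\mathrm{ord}_2(T)$, $\mathrm{ord}_2(d(d+1))$, and the parity of $z_3$.

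Next split on the rationality of $f$. If $f$ is rational, it corresponds to an elliptic curve $F/\Q$ of conductor $N_p$. Take the prime $q=2kp+1$ supplied by hypothesis. By the parenthetical remark preceding the lemma, condition $B(p,q)=\{\bar 0\}$ is equivalent to $E$ having multiplicative reduction at $q$. Combined with $q\nmid N_p$, Proposition \ref{prop3}(2) yields $q+1\equiv \pm a_q(F)\pmod p$. Since $q\equiv 1\pmod p$, squaring gives $p\mid a_q(F)^2-4$, contradicting hypothesis (2). If instead $f$ is irrational, apply Proposition \ref{prop4} at each $q\in\mathcal{P}$: one obtains $p\mid B_q(f)$ for every such $q$, hence $p\mid B_{\mathcal P}(f)=\gcd_{q\in\mathcal P}B_q(f)$, contradicting the hypothesis $p\nmid B_{\mathcal P}(f)$. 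Either way a contradiction, so the assumption $z_1\neq 0$ is untenable.

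The main obstacle is the conductor formula \eqref{equn24}. The odd part is a routine consequence of \eqref{equn22}, but the $2$-adic analysis requires a careful Tate's-algorithm case split for the small exponents $p\in\{5,7\}$, where the generic recipe fails to push the $2$-part of the level all the way down to $R$ and one must separately track whether $z_3$ is even or odd. Once \eqref{equn24} is verified, the rest is a mechanical invocation of Propositions \ref{prop3} and \ref{prop4} as outlined above.
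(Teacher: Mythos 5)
Your proposal is correct and follows essentially the same route as the paper: the paper's (largely implicit) justification of Lemma \ref{lemma5} is exactly the contradiction argument via the Frey curve \eqref{equn23}, level-lowering to a newform of level $N_p$ as in \eqref{equn24} under the hypothesis \eqref{equn22}, and then the dichotomy in which a rational $f$ is eliminated by Proposition \ref{prop3}(2) together with hypotheses (1)--(2) (condition (1) forcing $q\mid z_3$ and hence multiplicative reduction at $q$), while an irrational $f$ is eliminated by Proposition \ref{prop4} via $p\mid B_{\mathcal P}(f)$.
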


Now we write a {\it Magma} script for each $1\leq d \leq 50$ which computes the newforms of weight $2$ and level $N_{p}$. Here we assume that $\mathcal{P}$ is the set of primes $<100$ that do not divide $N_{p}$. Then for each irrational newform we compute $B_{\mathcal{P}}(f)$.
\par
For every prime $5\leq p < 4 \times 10^4$ that do not divide $B_{\mathcal{P}}(f)$, satisfies the inequality \eqref{equn22} and for every isogeny class of elliptic curves $F$ of conductor $N_{p}$, we search for the primes $q=2kp+1, q\nmid N_{p}$ with $k<765$ such that condition (1) and (2) of Lemma \ref{lemma5} hold.
\par
If we find such a prime then the equation \eqref{equn4} has no solution with $r=t$. The criterion holds for all values of $p$ except for few small values of $p$.  When $N_{p} = R$, there are $55$ cases  where either $p$ does not satisfy the inequality \eqref{equn22}, or it divides $B_{\mathcal{P}}(f)$ for some irrational newform  $f$, or $q$ do not satisfy condition (1) and (2) of Lemma \ref{lemma5}.
\par
For other special cases of $N_{p}$ we are remaining $3$ equations, which do not satisfy the above conditions. The largest value of $p$ among the $58$ quintuples is $p = 19$  with $d = 37,
\alpha = 1/4218, \beta = 4218, r = t = 75044648232, s = 1$.

Now we have total $1716+55+3=1774$ remaining equations, which can not be eliminated by Lemma \eqref{lemma4} and modular approach. These equations are of the form \eqref{equn9} with $r,s$ and $t$ positive integers and $\gcd(r,s,t)=1$. There is a possibility that $r,s$ and $t$ may not be pairwise coprime. We apply the procedure mentioned in [\cite{BPS},section 9.1] which is nothing but a repeatative way of clearing out the common factor to get an equation of the form 
\begin{equation}\label{equn25}
RY^{p}-SX^{2p}=T
\end{equation}  
where $R,S, T$ are pairwise coprime and $X, Y$ are divisors of $z_1,z_2$ respectively.
\par
If there exist a solution for the equation\eqref{equn25}, then $-ST$ is a square modulo $q$ for any odd prime $q\nmid R$. Also we check for local solubility at the primes dividing $R, S,T$ , and the primes $q\leq 19$. Applying these above tests, we are remaining with $175$ equations after elimination. For these remaining equations we apply descent.
\par
By applying Lemma \ref{lemma6} and \ref{lemma7} to the remaining equations, which were left after local solubility, we eliminate $\epsilon \in \Theta$. But we know that if $r=t$ then the equation \eqref{equn9} has a solution, i.e.,  $(z_{1},z_{2})=(0,1)$. For $r=t$, the reduction process leads to $R=T=1$. Thus the solution $(z_{1},z_{2})=(0,1)$ in \eqref{equn9} corresponds to $(X, Y)=(0,1)$ in \eqref{equn25}. Also $n\sqrt{-m}(K^*)^p\in \Theta$. Hence using Lemma \ref{lemma6} and \ref{lemma7}, we eliminate all $\epsilon$ except the case $\epsilon=n\sqrt{-m}$ as the equation \eqref{equn26} has a solution namely, $(X,Z) = (0, 1)$.
\par
For the case $\epsilon=n\sqrt{-m}$, the equation \eqref{equn25} has only one solution $(X,Y)=(0,1)$ by Lemma \ref{lemma7a}. If $X=0$ then $z_{1}=0$ and hence, $x=-(d+1)$. If Lemma \ref{lemma6}, \ref{lemma7} and Lemma \ref{lemma7a} allow us to conclude $X=0$, then we can eliminate $(r,s,t)$ as we can consider $x \neq -(d + 1)$. We write a {\it Magma} script for above procedure and we eliminate $164$ equations. Now we have to solve only $11$ remaining equations by Thue approach. By writting $V=Y^2$ in \eqref{equn25}, we obtain the Thue equation 

\begin{equation}
RY^{p}-SV^{p}=T.
\end{equation}     

Using Thue equation solver in {\it Magma}, we solve the remaining equations. Finally we have the follwing solutions.
\begin{align}\label{equn27}
\begin{split}
&27^3-28^3-\cdots-80^3+81^3=6^7,\\
&(-2)^3-(-1)^3+\cdots-5^3+6^3=2^7,\\
&(-12)^3-(-11)^3+\cdots-17^3+18^3=3^7,\\
&(-14)^3-(-13)^3+\cdots-25^3+26^3=6^5.
\end{split}
\end{align}
This concludes the proof of Theorem \ref{thm1} for $p\geq 5$.
\section{Proof of Theorem \ref{thm1} for $p =2$}

Putting $x+d+1 = u$ and $p = 2$ in the equation \eqref{equn4}, we have 
\begin{equation}\label{equn5}
z^2 =  u^3 + 3 d (d+1) u.
\end{equation}
This represents an elliptic curve, as $u^3 + 3d(d+1)u=0$ has no multiple roots. For $1\leq d\leq 50$, we obtain the integral solutions of the equation \eqref{equn5} by {\it Magma}. These solutions give rise to all the integral solutions of \eqref{equn2} and those are given explicitly in Table \ref{tab1}. 

\section{Proof of Theorem \ref{thm1} for $p=3$}
In this case the required equation is 
\begin{equation}\label{equn6}
z^3 =  u^3 + 3 d (d+1) u.
\end{equation}
One can easily see that the divisors of $3d(d+1)$ divide $\gcd(u, u^2 + 3 d (d+1))$. So if $\alpha=\gcd(u, u^2 + 3 d (d+1))$, then 
\begin{equation}\label{equn6a}
u=\alpha u_{1} \;\; \mathrm{and} \; u^2 + 3 d (d+1)=\alpha u_{2},
\end{equation}
where $\gcd(u_{1},u_{2})=1$.
 For non-negative integers $l, m \equiv 0\ (\mathrm{mod}\ 3)$, we can write
 \begin{align}\label{equn6b}
 \begin{split}
& u_{1}=2^{l}\cdot u_{3}\;\; \mathrm{and} \; u_{2}= 3^{m}\cdot u_{4}, \\
\mathrm{or}\;\; &u_{1}=3^{l}\cdot u_{3} \;\; \mathrm{and} \; u_{2}= 2^{m}\cdot u_{4},\\ 
 \mathrm{or }\;\;& u_{1}=2^{l}\cdot 3^{m}\cdot u_{3}\;\; \mathrm{and}\; \; u_{2}= u_{4},\\
\mathrm{or}\;\; &u_{1}=u_{3} \;\; \mathrm{and} \; u_{2}= 2^{l}\cdot 3^{m} \cdot u_{4},
\end{split}
\end{align} 
where $u_3$ and $u_4$ are integers with $\gcd(u_{3},u_{4})=1$.
 
Also write $\alpha=2^{\delta_2}\cdot 3^{\delta_3} \cdot \alpha_{1}$ for some integer $\alpha_{1}$
with $\delta_2:=\mathrm{ord}_{2}(\alpha)\;\; \mathrm{and} \;\delta_3:=\mathrm{ord}_{3}(\alpha)$.
As $\alpha u_{1}\cdot \alpha u_{2}=z^3$, we have $2^{2\delta_{2}+l}3^{2\delta_{3}+m}\alpha_{1}^{2}u_{3}u_{4}=z^{3}$ for $u_{1}=2^{l}\cdot u_{3}, u_{2}= 3^{m}\cdot u_{4}$ and this will imply $\alpha_{1}^{2}u_{3}u_{4}=z_{1}^{3}$ for some integer $z_1$. Since $1\leq d \leq 50$, for any prime $q\mid \alpha_{1}, \mathrm{ord}_{q}(\alpha_{1})=2$. Therefore, we can conclude that, if $\alpha_{1}^{2}\mid z_{1}^{3}$ then $\alpha_{1} \mid z_{1}$. Write $z_{1}=\alpha_{1}\cdot z_{2}$ for some integer $z_2$, hence we have $u_{3}\cdot u_{4}=\alpha_{1}z_{2}^{3}$. Since $\gcd(u_{3}, u_{4})=1$, we can write 
\begin{equation}\label{equn6c}
u_{3}=\alpha_{2}\cdot z_{3}^{3}\; \mathrm{and}\;u_{4}=\alpha_{3}\cdot z_{4}^{3},
\end{equation}
 for some integers $\alpha_{2}, \alpha_{3}, z_{3},z_{4}$ with $\alpha_{2}\alpha_{3}=\alpha_{1}$ and $z_{3}z_{4}=z_{2}$. Rewriting the equation \eqref{equn6a}, we have  
\begin{equation}\label{equn6d}
\alpha \cdot u_{2} -\alpha^2\cdot u_{1}^{2}=3d(d+1).
\end{equation}
Now from equations \eqref{equn6b},\eqref{equn6c} and \eqref{equn6d},  we will have a set of Thue equations as follows:
\begin{align}
 \begin{split}
& \alpha \cdot \alpha_{3}\cdot  3^{m} \cdot z_{4}^3 -\alpha^2 \cdot\alpha_{2}^2 \cdot  2^{2l} \cdot (z_{3}^{2})^3=3d(d+1),\\
\mathrm{or}\;\;& \alpha \cdot \alpha_{3}\cdot  2^{m} \cdot z_{4}^3 -\alpha^2 \cdot\alpha_{2}^2 \cdot  3^{2l} \cdot (z_{3}^{2})^3=3d(d+1),\\
\mathrm{or}\;\;& \alpha \cdot \alpha_{3}\cdot z_{4}^3 -\alpha^2 \cdot\alpha_{2}^2 \cdot  2^{2l} \cdot 3^{2m} \cdot (z_{3}^{2})^3=3d(d+1),\\
\mathrm{or}\;\;& \alpha \cdot \alpha_{3}\cdot 2^{2l} \cdot 3^{2m} \cdot z_{4}^3 -\alpha^2 \cdot\alpha_{2}^2 \cdot  (z_{3}^{2})^3=3d(d+1).
 \end{split}
\end{align}
 
Now, for $1\leq d \leq 50$ and $l, m \equiv 0\ (\mathrm{mod}\ 3)$ we have written a {\it Magma} script to solve these four Thue equations. The theory about solving these Thue equations is discussed in \cite{NPS}. Using backward calculations from these solutions we find all solutions for the equation \eqref{equn6} and 
these are given explicitly in Table \ref{tab1}. 

\begin{table}[h!]
\begin{center}
 \begin{tabular}{||c| p{12cm}||} 
 \hline
 $d$ & $(x,z,p)$  \\ [0.5ex] 
 \hline\hline
 $d$ & $(-d-1, 0, p)$\\
 \hline
 $2$ & $(0, \pm 9, 2), (3,\pm 18,2), (69,\pm 612, 2)$  \\ 
  \hline
 4  & $(-3, 2, 7), (1, \pm 24, 2), (5,\pm 40, 2), (235,\pm 3720, 2)$  \\
 \hline
 5  & $(34,\pm 260, 2)$ \\
 \hline
 6 & $(0,\pm 35, 2), (11,\pm 90, 2)$   \\ 
 \hline
 7  & $(-7, \pm 13, 2),(160,\pm 2184)$ \\
 \hline
  8  & $(16,\pm 145, 2)$ \\
 \hline
  11  & $(36,\pm 360, 2)$ \\
 \hline
  12  & $(-9,\pm 44, 2),(0,\pm 91, 2), (23, \pm 252, 2), (104, \pm 1287,2),$\\
  & $(195,\pm 3016, 2)$ \\
  \hline
  15  & $(-13, 3, 7)$ \\
 \hline
  16  & $(83,\pm 1040, 2)$ \\
 \hline
  19  & $(-16,\pm 68, 2),(-14, \pm 84,2), (34, \pm 468,2),$\\
  &$(170, \pm 2660,2),(265, \pm 4845,2), (5746,\pm 437844,2)$ \\
 \hline
  20  & $(-15, 6, 5), (0,\pm 189,2), (39, \pm 540, 2)$ \\
 \hline
  26 & $(-39, -30, 3), (-36, -27, 3), (-18, 27, 3), (-15, 30, 3)$\\
 \hline
  27  & $(-10,\pm 216, 2), (-46, -36, 3), (-34, -24, 3), (-22, 24, 3),$\\ 
  &$(-10, 36, 3), (84, \pm 1288, 2), (98, \pm 1512, 2),$\\
  & $(39734, \pm 7928712, 2), (26,6,7)$ \\
 \hline
  28  & $(13,\pm 420, 2), (29, \pm 580, 2)$ \\
 \hline
  29  & $(-24,\pm 126, 2), (405, \pm 9135, 2)$ \\
 \hline
  30  & $(-21,\pm 170, 2),(0,\pm 341, 2), (59, \pm 990, 20), $\\
  & $(248, \pm 4743, 2),(1179, \pm 42130, 2), (5208, \pm 379223, 2)$ \\
 \hline
  32 & $(-24,\pm171, 2), (319, \pm 6688, 2)$ \\
 \hline
  34  & $(16,\pm 561, 2), (35, \pm 770, 2), (14245, \pm 1706460, 2)$ \\
 \hline
  36   & $(-91, -72, 3), (-39, -20, 3), (-35, 20, 3), (17,72, 3)$ \\
 \hline
  38  & $(2811,\pm 152190, 2)$ \\
 \hline
  39 & $(-31,\pm 207, 2), (81, \pm 1529, 2), (480, \pm 11960, 2)$ \\
 \hline
  42  & $(-124, -99, 3), (0,\pm 559, 2), (83, \pm 1638, 2), (38, 99, 3)$ \\
 \hline
  45  & $(8,\pm 702, 2), (69, \pm 1495, 2), (440, \pm 10854, 2)$ \\
 \hline
  47  & $(-36,\pm 288, 2), (516, \pm 13536, 2)$ \\
 \hline
  49  & $(230,\pm 4900, 2), (-95, -75, 3), (-5, 75, 3)$ \\ [1ex] 
 \hline
\end{tabular}
\caption{The integral solutions to equation \eqref{equn2} for $r =2d+1$ with $1\leq d \leq 50$ and $p$ is prime. }\label{tab1}
\end{center}
\end{table}

\begin{remark}
For $r = 2d$ the equation \eqref{equn2} becomes
\begin{equation*}\label{equn3a}
d[3x^2 + 3(2d+1)x + d(4d+3)] = (-z)^p.
\end{equation*}
Since the polynomial $3dx^2 + 3d(2d+1)x + d^2(4d+3)$ is an irreducible polynomial over $\mathbb{Q}$ for $1 \leq d \leq 50$, by Theorem 12.11.2 in \cite[p. 437]{HC}, we conclude that the equation \eqref{equn2} has finitely many solutions for even $r$.

\end{remark}

\section{concluding remark}
One can view alternating sum of consecutive cubes of even length as alternating sum of consecutive cubes of odd length by using symmetry around $0$. But in that case, we have to deal with alternating sum of consecutive cubes of higher length. So if we get $d > 50$, then we can not conclude anything about getting perfect powers in the alternating sum of even length. However, if we get $d \leq 50$, then we can find out perfect powers in the alternating sum of even length. For example, the last three equations in \eqref{equn27} can be re-written as
\begin{align}\label{equn28}
\begin{split}
&3^3-4^3+5^3-6^3=(-2)^7,\\
&13^3-14^3+15^3-16^3+17^3-18^3=(-3)^7,\\
&15^3-16^3+\cdots+25^3-26^3=(-6)^5.
\end{split}
\end{align}

Hence we see that $(2,2,-2,7), (3,12,-3,7), (6,14,-6,5)$ are solutions for $(d,x,z,p)$ in the equation\eqref{equn2}.
In general, when $r$ is even in the equation \eqref{equn2}, we conjecture the following. 

\begin{conj}
Let $r = 2 d $ with $ 1\leq d \leq 50$ and $p \geq 5$ be a prime number. Then the possible integer solutions of the equation \eqref{equn2} are given by
$$
(d,x,z,p) = \{(2,2,-2,7), (3,12,-3,7), (6,14,-6,5), (27,215,-9,7)\}.$$
\end{conj}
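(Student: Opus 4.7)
The plan is to adapt the machinery that proves Theorem~\ref{thm1} to the superelliptic equation $d\cdot Q(x)=(-z)^p$ with $Q(x)=3x^2+3(2d+1)x+d(4d+3)$, which is the form \eqref{equn2} takes for $r=2d$. The crucial structural difference from the odd case is that the polynomial on the left factors as a \emph{constant} times an irreducible quadratic of negative discriminant $-3(4d^2-3)$, rather than the linear $\times$ quadratic split that drives the descent \eqref{equn7}. Completing the square via the substitution $u=2(x+d)+1$, which is automatically odd, puts the equation in Lebesgue--Nagell shape
\[
d\bigl(3u^2+4d^2-3\bigr)=4(-z)^p.
\]

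First I would perform a two-variable descent analogous to the one leading to \eqref{equn9}. Setting $\delta=\gcd\bigl(d,\,3u^2+4d^2-3\bigr)$, a short calculation gives $\delta\mid 3d$, so the ansatz $d=\alpha a^p$ and $3u^2+4d^2-3=\beta b^p$ with $\alpha\beta$ supported on primes dividing $6d$ produces, for each fixed $d\in\{1,\dots,50\}$, a finite explicit list of ternary equations
\[
R\,u^2+S\;=\;T\,w^p,\qquad \gcd(R,S)=1,
\]
where $w\mid z$ and $(R,S,T)$ ranges over an explicitly enumerable finite set. These are exactly of Bennett--Skinner type, so to each one I would attach a Frey elliptic curve over $\Q$ (with appropriate quadratic twist according to $u\bmod 4$), invoke modularity, and apply Ribet's level-lowering theorem to conclude that, modulo $p$, the Frey curve arises from a weight-$2$ cuspidal newform of a predictable level $N_p$ depending only on the radical of $RST$.

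Second, the exponent $p$ would be bounded in three stages, mirroring the arguments following Lemma~\ref{lemma3}. The Kraus-style congruence criterion of Lemma~\ref{lemma4}, restated for $Ru^2+S=Tw^p$ by examining $p$-th powers modulo auxiliary primes $q=2kp+1$, disposes of most pairs $(d,p)$ locally. Residual cases are tackled by the modular method via Propositions~\ref{prop3}, \ref{prop4} and Lemma~\ref{lemma5}, applied both to rational newforms (via congruences on $a_q(F)$) and to irrational newforms (via computation of $B_\mathcal{P}(f)$ over a set $\mathcal{P}$ of small auxiliary primes). For an absolute upper bound on $p$, I would feed the quantities $S/(Ru^2)$ and $w$ into Laurent's Proposition~\ref{Linearform} exactly as in Lemmas~\ref{lemma1}--\ref{lemma3}, obtaining a bound of order $p<10^5$. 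The final cleanup for the remaining small exponents is handled by the Selmer-group descent of Lemmas~\ref{lemma6}--\ref{lemma7a} and, for any stubborn triples, by a direct Thue-equation solver.

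The main obstacle I anticipate is the Frey-curve step: because $R$ and $S$ routinely inherit several prime divisors from $d$, and the $2$- and $3$-adic analysis of $3u^2+4d^2-3$ is more delicate than in the odd case (for odd $u$, the quantity varies through many residue classes modulo $16$ as $d$ varies), the predicted level $N_p$ is typically substantially larger than the levels $R,\,R/2,\,2^2 R,\,2^4 R$ of \eqref{equn24}, and a significant fraction of the resulting irrational newforms will survive the test $p\nmid B_\mathcal{P}(f)$. A secondary subtle point is that the symmetry trick of the concluding remark shows that sporadic large-$d$ solutions genuinely occur: the conjectured quadruple $(d,x,z,p)=(27,215,-9,7)$ reflects around $0$ to an odd-length alternating-cube identity of length $55$, beyond the reach of Theorem~\ref{thm1}, so no a priori size argument will eliminate large-$d$ candidates and the full descent must be carried out uniformly for every $d\leq 50$.
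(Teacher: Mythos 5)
First, a point of comparison that matters: the paper offers no proof of this statement at all --- it is explicitly labelled a conjecture, and the only thing the authors establish for $r=2d$ is finiteness of solutions, via the remark that $3dx^2+3d(2d+1)x+d^2(4d+3)$ is irreducible over $\Q$ together with the LeVeque/Schinzel--Tijdeman theorem quoted from Cohen's book. So there is no ``paper's proof'' for your plan to coincide with or diverge from; what you have written is a research programme for a statement the authors deliberately left open, and it should be judged on whether it would actually close the problem. It would not, for the following concrete reason.

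The step that fails is the exponent bound. In the odd case the left-hand side splits as (linear)\,$\times$\,(quadratic), so \eqref{equn7} expresses \emph{both} factors as $p$-th powers up to fixed rationals $\alpha,\beta$ depending only on $d$; the resulting linear form $\Lambda=\log\alpha_1-p\log\alpha_2$ in Lemma \ref{lemma1} has $\alpha_1=\beta/\alpha^2$ a constant of bounded height, which is exactly what makes Laurent's bound in Lemma \ref{lemma3} produce $p<4\times 10^4$. For $r=2d$ your reduction yields $R\,u^2+S=T\,w^p$ with only \emph{one} $p$-th power; the natural analogue $\Lambda=p\log w-\log(Ru^2/T)$ has $\alpha_1=Ru^2/T$ depending on the unknown $u$, whose height grows like $w^{p/2}$. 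Feeding this into Proposition \ref{Linearform} gives $\log A_1\asymp (p/2)\log w$, and the resulting inequality $p\log w\ll \log A_1\log A_2(\cdots)^2$ is satisfied identically and bounds nothing. Bounding $p$ for such Lebesgue--Nagell equations requires a genuinely different argument --- factoring $Ru^2+S$ in the imaginary quadratic field $\Q\bigl(\sqrt{-3(4d^2-3)}\bigr)$ and applying linear forms in two logarithms to a quotient of conjugate algebraic integers (as in Bugeaud--Mignotte--Siksek), with class-group and unit complications absent from the odd case --- and your sketch does not supply it. Beyond this, you yourself concede that the Frey levels for $Tw^p-S=Ru^2$ will be large and that many irrational newforms will survive $p\nmid B_{\mathcal P}(f)$; an anticipated obstacle that is not overcome is a gap, not a caveat. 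As it stands the proposal establishes nothing beyond the finiteness the paper already records, and the statement remains a conjecture.
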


\end{document}